\renewcommand*{\@fnsymbol}[1]{\ensuremath{\ifcase#1\or 1\or 2\or 3\else\@ctrerr\fi}}
\numberwithin{equation}{section}
\newtheorem{theorem}{Theorem}[section]
\newtheorem{condition}{Condition}
\newtheorem{corollary}[theorem]{Corollary}
\newtheorem{lemma}[theorem]{Lemma}
\newtheorem{proposition}[theorem]{Proposition}
\theoremstyle{definition}
\newtheorem{definition}[theorem]{Definition}
\newtheorem{remark}[theorem]{Remark}
\newtheorem{example}[theorem]{Example}
\newcommand\R{{\mathbb{R}}}
\newcommand{\N}{\mathbb{N}}
\newcommand{\B}{\mathbb{B}}
\newcommand\dt{\dfrac{d}{dt}}
\renewcommand\L{{\mathcal{L}}}
\begin{document}

\title{Row-finite systems of ordinary differential equations in a scale of Banach spaces}
\author{Alexei Daletskii\thanks{Department of Mathematics, The University of York, York YO1 5DD, U.K. ({\tt ad557@york.ac.uk}).} \and Dmitri Finkelshtein\thanks{Department of Mathematics,
Swansea University, Singleton Park, Swansea SA2 8PP, U.K. ({\tt d.l.finkelshtein@swansea.ac.uk}).}}
\maketitle

\begin{abstract}
Motivated by the study of dynamics of interacting spins for infinite particle systems, we consider an infinite family of first order differential equations in a Euclidean space, parameterized by elements $x$ of a fixed countable set. We suppose that the system is row-finite, that is, the right-hand side of the $x$-equation depends on a finite but in general unbounded number $n_x$ of variables. Under certain dissipativity-type conditions on the right-hand side and a bound on the growth of $n_x$, we show the existence of the solutions with infinite life-time, and prove that they live in an increasing scale of Banach spaces. For this, we obtain uniform estimates for solutions to approximating finite systems using a version of Ovsyannikov's method for linear systems in a scale of Banach spaces. As a by-product, we develop an infinite-time generalization of the Ovsyannikov method.

\textbf{Keywords:} row-finite systems, interacting particle systems, scale of Banach spaces, Hamiltonian dynamics, gradient diffusion, self-orga\-ni\-sed systems, Ovsyannikov's method, dissipativity

\textbf{2010 Mathematics Subject Classification:} 82C22, 34A12, 34A30
\end{abstract}

\section{Introduction}
In recent decades, there has been an increasing interest to the study of countable systems of particles randomly distributed in the Euclidean space $\mathbb{R}^{d}$, which appear, in particular, in modeling of non-crystalline (amorphous) substances, e.g.  ferrofluids and amorphous magnets, see e.g. \cite{Romano},\cite[Section 11]{OHandley},\cite{Bov} and \cite{DKKP,DKKP1}.
Each particle is characterized by its position $x\in\mathbb{R}^d$ and an internal parameter (spin)
$q _{x}\in \mathbb{R}^{\nu}$. For a given fixed (``quenched'') configuration $\gamma $ of particle positions, which is a locally finite
subset of $\mathbb{R}^{d}$, one~considers a system of differential
equations describing (non-equilibrium) dynamics of spins $q_{x}$,
$x\in \gamma $.

Two spins $q_{x}$ and $q_{y}$ are allowed to
interact via a pair potential if the distance between $x$ and $y$ is no more
than a fixed interaction radius $r>0$, that is, they are neighbors in the
geometric graph defined by $\gamma $ and $r$. The case where the vertex degrees of the graph are globally bounded (in particular, if $\gamma$ has a regular structure, e.g. $\gamma=\mathbb{Z}^d$) has been well-studied (even in the stochastic case), see \cite{LLL}, \cite{DZ1} and modern developments in \cite{INZ}, and
references therein. However, aforementioned applications to non-crystalline substances require to deal with unbounded vertex degrees. For example, if  configuration $\gamma$ is distributed according to a Poisson or, more generally, Gibbs measure, the typical number of ``neighbors'' of a particle located at  $x\in\mathbb{R}^d$ is proportional to $\log |x|$ (see Example~\ref{ex:max} below).

More generally, we consider, for a fixed $\gamma\subset\mathbb{R}^d$, the system
  \begin{equation}
\dt{q}_{x}(t)=F_{x}(\bar{q}(t)), \quad
q_{x}(0)=q_{x,0}\in S, \quad x\in \gamma,
\label{ODE}
\end{equation}
where $\bar{q}(t)=\left( q_{x}(t)\right) _{x\in \gamma }\in S^\gamma$,  $t\in\R_+:=[0,\infty)$ and $F_x:S^\gamma\to S$ for each $x\in \gamma$.
 We suppose that the system is row-finite, that is, $F_x$ depends only on
 a~finite number $n_x$ of components of the vector $\bar{q}$,
 which may be unbounded in $x\in\gamma$.

 A natural approach to the study of  system \eqref{ODE} would be to realize it as an evolution equation (with ``good'' coefficients) in a Banach space of sequences. However, in many cases, this proves to be impossible even for linear systems, as it happens for instance on unbounded vertex degree graphs. Beyond the framework of a fixed Banach space, linear row-finite systems  are always solvable, although the corresponding solutions might show an uncontrolled growth in $x\in\gamma$, see e.g. in \cite[\S\,6]{Deim}.

The aim of this work is to show the existence of solutions $q_{x}(t)$, $x\in\gamma$,  to system \eqref{ODE} keeping a control over the growth of the solution in $x\in\gamma$. In order to do this, we introduce a suitable increasing scale of Banach spaces $S_{\alpha }^{\gamma}$, $\alpha\ge 0$.  Our main result (Theorem~\ref{theor-main}) states that for
 each $0<\alpha <\beta $  and  any initial condition $ \bar{q}_{0} \in S_{\alpha }^{\gamma }$
there exists a  solution $\bar{q}(t)\in S_{\beta }^\gamma$  of system \eqref{ODE} on infinite time-interval $[0,\infty)$, subject to certain dissipativity-type conditions on $F_x$ and a bound on the order of growth of $n_x$ in $x\in\gamma$.

For the proof, we approximate \eqref{ODE} by finite systems. Uniform estimates of the corresponding solutions are obtained using a version of the so-called Ovsyannikov method for linear systems in a scale of Banach spaces. In contrast to the classical Ovsyannikov method (see e.g. \cite{Deim}), our
 modified version gives the existence of solutions with infinite lifetime (Theorem~\ref{Th-ovs}).

 We also prove the uniqueness of this solution, which requires however stronger conditions on $F_x$ (Theorem~\ref{theor-uniq}).

 We present three types of examples. In Subsection~\ref{gtd}, we consider a general gradient-type system of the form \eqref{ODE}. In~Subsection~\ref{ias}, we replace $(q_x)_{x\in\gamma}$ in the right hand side of \eqref{ODE} by the pairs $(p_x,q_x)_{x\in\gamma}$ with $\frac{d}{dt}p_x=q_x$ to study an infinite anharmonic (Hamiltonian) systems. Note that the latter example, in the case of the regular $\gamma=\mathbb{Z}^d$, was studied in \cite{LLL}.
Finally, in Subsection~\ref{sad}, we consider an example of multiparticle dynamics of somewhat different type, motivated by the study of self-organized systems, see review in \cite{MT}.

\section{Problem description and the existence result\label{PDER}}

\subsection{The setup and main result}
We start with more precise description of our model.
 We suppose that $\gamma$ is a locally finite subset of the space $X=\R^d$, $d\geq1$, i.e.  the set $\gamma\cap\Lambda$ is finite for any compact $\Lambda\subset X$. We denote by $S^\eta$ the vector space of elements of the form $(q_x)_{x\in\eta}$ with $q_x\in S$ for $x\in\eta\subseteq \gamma$.

Let us fix a number $r>0$ and introduce the  family of  \emph{finite} sets
\[
	\gamma _{x,r} :=\bigl\{ y\in \gamma \bigm\vert \left\vert x-y\right\vert \leq
r\bigr\}, \quad x\in \gamma.
\]
Consider the space  $S^{\gamma_{x,r}}$ endowed with the (finite) Cartesian product topology and introduce the notation
$\bar{q}_{x,r}:=\left( q_{u}\right) _{u\in \gamma_{x,r}}$.

We suppose that the following condition holds, which reflects the fact that the system \eqref{ODE} is row-finite.
\begin{condition}\label{cond:row-finite}
For each $x\in\gamma$ there exists $f_x\in C^1(S^{\gamma_{x,r}},S)$ such that
		$$F_{x}(\bar{q})=f_{x}\bigl(\bar{q}_{x,r}\bigr), 	 \ \bar{q}\in S^\gamma.$$
\end{condition}

Throughout the paper, we will understand solutions of \eqref{ODE} in the following sense.
\begin{definition}
We call a map $\bar{q}:[0,T)\to S^\gamma$ a (pointwise) solution of system \eqref{ODE} if the function $t\mapsto q_x(t)\in S$ is continuous (resp. continuously differentiable) on the interval $[0,T)$ (resp. $(0,T)$) and satisfies \eqref{ODE}  for each $x\in\gamma$.
\end{definition}

In what follows, we assume that the family of mappings $(F_x)_{x\in \gamma}$ is in certain sense dissipative. To this end, let  $U_x\in C^2(S,\R_+)$, $x\in \gamma$, be a family of functions such that
\begin{alignat}{2}
U_{x}(q)&\geq C_{1}\left\vert q\right\vert, &&\qquad q\in S, \ |q|\geq 1,  \label{c0}\\
U_{x}(q)&\leq C_{2}(\left\vert q\right\vert^j+1),   &&\qquad q\in S, \label{c01}
\end{alignat}
for some $C_1, C_2>0$ and $j\in\N$.
Next, we introduce the following notation:
\[
\text{for }  x\in \gamma, \,\,y\sim x \text{ means that } y\in \gamma _{x,r}.
\]
Let also
$n_{x}= n_{x,r}(\gamma )\geq1$
denote the number of points in $\gamma _{x,r}$, $x\in\gamma$.
\begin{condition}\label{cond:diss}
Let $U_x\in C^2(S,\R_+)$, $x\in \gamma$, satisfy \eqref{c0}--\eqref{c01}, and
there exist $C>0$ and $m\in\N$ such that
\begin{equation}\label{eq:conddiss}
		F_{x}(\bar{q})\cdot \nabla U_{x}(q_x)\leq C \sum_{y\sim x} (n_x n_y)^m U_y(q_y), \quad \bar{q}\in S^\gamma,\ x\in\gamma.
\end{equation}
\end{condition}
Here the dot $\cdot$ denotes Euclidean inner product in $S$.
Examples of  the families  $(F_x)_{x\in \gamma}$  satisfying Conditions~\ref{cond:row-finite}--\ref{cond:diss} will be given in Section~\ref{IAS}.

\smallskip
Let $w:\R_+\to[1,\infty)$ be a non-decreasing  function.  We define the  family of Banach spaces
\begin{equation}
S_{\alpha }^{\gamma }:=\Bigl\{ \bar{q} \in S^\gamma \Bigm\vert \lVert \bar{q} \rVert _{\alpha }:=\sup_{x\in \gamma }\frac{\left\lvert
q_{x}\right\rvert}{w(|x|)^{\alpha}}<\infty \Bigr\} ,\quad
\alpha >0.\label{defSalphagamma}
\end{equation}
Here and below, with an abuse of notations, $|x|$ means the Euclidean norm in $X=\R^d$, whereas $|q_x|$ means the Euclidean norm in $S=\R^\nu$. Clearly, the family \eqref{defSalphagamma} is increasing in $\alpha$, i.e. $S_\alpha^\gamma\subset S_\beta^\gamma$ for $0<\alpha <\beta$.

We introduce now certain balance condition on the growth of $n_x$ and $w(|x|)$ as $|x|\rightarrow\infty$.
\begin{definition}
Let $\mathcal{R}$ denote the class of non-decreasing functions $f:\R_+\to[1,\infty)$ such that
\begin{equation}\label{eq:condw}
		f_\tau:=\sup_{s\in\R_+}\frac{f(\tau+s)}{f(s)}<\infty
\end{equation}
for any $\tau>0$.
\end{definition}

Examples of functions from $\mathcal{R}$ are considered in Subsection~\ref{subsec:disc-ex} below.

\begin{definition}
Let  $w,z\in\mathcal{R}$. We call the pair $(w,z)$ admissible if for any $\beta,\mu>0$ and $\alpha\in (0,\beta)$ there exists $D=D(\beta,\mu)\geq 1$ such that
\begin{equation}\label{eq:balancecond}
		\sup_{s\in\R_+} z(s)^{\mu} w(s)^{-\alpha}\leq \frac{D}{\alpha}.
\end{equation}
\end{definition}

We formulate now the balance condition. Let $w$ be the weight function defining the scale of spaces $S^\gamma_\alpha$ in \eqref{defSalphagamma}.
\begin{condition}\label{cond:balance}
There exists $z\in\mathcal R$ such that
\begin{equation}\label{eq:nx}
		n_x\leq z(|x|), \quad x\in\gamma,
\end{equation}
and the pair  $(w,z)$ is admissible.
\end{condition}

The following  is our main existence result.
\begin{theorem}
\label{theor-main}
  Let Conditions~\ref{cond:row-finite}--\ref{cond:balance} hold.
   Then, for
 each $\alpha\ge 0$  and any initial condition $ \bar{q}_{0}=(q_{x,0})_{x\in\gamma} \in S_{\alpha }^{\gamma }$,
there exists a pointwise solution $\bar{q}(t)$ of system \eqref{ODE} on infinite time-interval $[0,\infty)$. Moreover, for   $j\in\N$ from \eqref{c01} we have the inclusion
\[
	\bar{q}(t)\in \bigcap_{\beta>j \alpha} S_{\beta }^\gamma,\quad t\in [0,\infty),
\]
and the estimate
\begin{equation}
\lVert \bar{q}(t)\rVert_{\beta}\leq C_p(\alpha ,\beta ;t)\left(\lVert \bar{q}_0\rVert_\alpha^j+1\right) \label{q-bound}
\end{equation}
 holds true for any $\beta>j\alpha$ and $p>1$.
 Here  $C_p(\alpha ,\beta ):\R_+\to\R_+$  is  an entire function of order
$\rho=\frac{p}{p-1}>1$ and type $\sigma=B^\rho e^{\rho(\beta r+\beta-j\alpha+1)} (e\rho)^{-1}(\beta- j\alpha)^{- \frac{1}{p-1}}>0$
with some $B=B(p)>0$.
\end{theorem}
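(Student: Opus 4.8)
The plan is to build the solution of the infinite row-finite system \eqref{ODE} as a limit of solutions to finite truncations. Fix an exhausting sequence of finite subsets $\gamma^{(k)}\uparrow\gamma$ (e.g. $\gamma\cap\{|x|\le k\}$) and, for each $k$, consider the finite system in which $F_x$ is replaced by $F_x$ with all arguments outside $\gamma^{(k)}$ frozen at their initial values (or simply set to zero — since the system is row-finite, for $x$ well inside $\gamma^{(k)}$ the equation is already exact). This finite system has a local $C^1$ solution $\bar q^{(k)}(t)$ by Picard--Lindel\"of; the real work is to show these solutions exist for all $t\in[0,\infty)$ and satisfy an estimate uniform in $k$ in some space $S^\gamma_\beta$.

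The key step is the a priori estimate via the Lyapunov functions $U_x$. Set $u_x(t):=U_x(q^{(k)}_x(t))$. Differentiating and using Condition~\ref{cond:diss},
\[
\dt u_x(t)=F_x(\bar q^{(k)}(t))\cdot\nabla U_x(q^{(k)}_x(t))\le C\sum_{y\sim x}(n_xn_y)^m\,u_y(t).
\]
This is precisely a \emph{linear} row-finite system of differential inequalities with nonnegative coefficients $a_{xy}=C(n_xn_y)^m\le C\,z(|x|)^m z(|y|)^m$ supported on the geometric graph. Here is where the infinite-time Ovsyannikov method (Theorem~\ref{Th-ovs}) enters: one checks that the operator $(Au)_x=\sum_{y\sim x}a_{xy}u_y$ maps the scale $S^\gamma_\alpha\to S^\gamma_\beta$ for $\alpha<\beta$ with norm bound controlled by $\alpha^{-1}\times$(admissibility constant), using that $y\sim x$ forces $|y|\le|x|+r$ so $w(|y|)^{\beta'}\le w_r^{\beta'}w(|x|)^{\beta'}$ (property \eqref{eq:condw}) and that $z(|x|)^{2m}w(|x|)^{-(\beta-\alpha)}\le D/(\beta-\alpha)$ by admissibility \eqref{eq:balancecond}. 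Theorem~\ref{Th-ovs} then yields a majorant $v(t)$ with $u_x(t)\le v_x(t)$ for all $t$, with $\|v(t)\|_\beta$ bounded by an entire function of order $\rho=p/(p-1)$ and the stated type. Combining with the lower bound \eqref{c0}, $|q^{(k)}_x(t)|\le \max(1,u_x(t)/C_1)\le 1+u_x(t)/C_1$, and the upper bound \eqref{c01} applied at $t=0$, $u_x(0)=U_x(q_{x,0})\le C_2(|q_{x,0}|^j+1)\le C_2(w(|x|)^{j\alpha}\|\bar q_0\|_\alpha^j+1)$, one obtains $|q^{(k)}_x(t)|\le C_p(\alpha,\beta;t)(\|\bar q_0\|_\alpha^j+1)\,w(|x|)^\beta$ for every $\beta>j\alpha$, uniformly in $k$. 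In particular each finite solution has infinite lifetime.

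With the uniform bound in hand, one extracts the pointwise limit. Each component $q^{(k)}_x(t)$ is, for $k$ large, governed by the exact equation $\dt q_x=f_x(\bar q_{x,r})$ whose right-hand side depends only on finitely many components, all of which are uniformly bounded on compact time intervals; hence the derivatives are uniformly bounded on compacts and the family $\{q^{(k)}_x\}_k$ is equicontinuous. By Arzel\`a--Ascoli and a diagonal argument over the countable set $\gamma$, a subsequence converges uniformly on compacts to some $\bar q(t)$, which inherits the bound \eqref{q-bound}, lies in $\bigcap_{\beta>j\alpha}S^\gamma_\beta$, and — passing to the limit in the integral form $q_x(t)=q_{x,0}+\int_0^t f_x(\bar q^{(k)}_{x,r}(s))\,ds$ using continuity of $f_x$ — solves \eqref{ODE} pointwise; continuous differentiability follows since the right-hand side is then continuous in $t$.

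The main obstacle is the second step: verifying that the coefficient operator $A$ is admissible for the scale in the precise sense required by Theorem~\ref{Th-ovs}, and tracking the constants so that the resulting order and type come out as stated. This is essentially bookkeeping with \eqref{eq:condw} and \eqref{eq:balancecond}, but one must be careful that the factor $(n_xn_y)^m$ produces $z(|x|)^m z(|y|)^m\le z_r^m\,z(|x|)^{2m}$ (again by \eqref{eq:condw} since $|y|\le|x|+r$), so that a single admissibility estimate with $\mu=2m$ absorbs it; the $1/\alpha$ blow-up in \eqref{eq:balancecond} is exactly what forces the Ovsyannikov-type loss of derivatives $\beta-j\alpha>0$ and feeds into the type $\sigma$. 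A secondary subtlety is that the truncation must not destroy the dissipativity inequality — setting frozen arguments to constants could in principle violate \eqref{eq:conddiss}; this is handled by choosing the truncation so that for each $x$ the $x$-equation in system $k$ is either exact (when $\gamma_{x,r}\subseteq\gamma^{(k)}$) or of a form still dominated by the same right-hand side, e.g. by modifying $f_x$ to a cutoff version for which the bound persists, and noting that for fixed $x$ the former case holds for all large $k$.
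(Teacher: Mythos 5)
Your overall strategy coincides with the paper's: finite-volume truncations with frozen components, the Lyapunov functions $U_x$ producing a linear row-finite differential inequality with matrix $A_{x,y}=C(n_xn_y)^m 1\!\!1_{y\sim x}$, a comparison with the linear majorizing system solved by the infinite-time Ovsyannikov theorem, and then Arzel\`a--Ascoli with a diagonal argument over the countable $\gamma$. Your handling of the frozen sites is even slightly more elaborate than necessary: in the paper one simply notes that at a frozen site $\dt U_x(q^\Lambda_x(t))=0$, which is trivially dominated by the nonnegative right-hand side, so the differential inequality survives the truncation without modifying $f_x$.

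There is, however, one concrete gap precisely at the step you dismiss as bookkeeping. The estimate you sketch, $z(|x|)^{2m}w(|x|)^{-(\beta-\alpha)}\leq D/(\beta-\alpha)$, gives an operator bound of the form $c(\alpha''-\alpha')^{-1}$, i.e.\ the exponent $q=1$ in \eqref{A-cond}. Theorem~\ref{Th-ovs} requires $q\in(0,1)$: with $q=1$ the majorizing series $\sum_n (Det)^n$ has finite radius of convergence, which is exactly the classical Ovsyannikov situation and would only give a finite lifetime for the majorant --- destroying the global-in-time conclusion. The paper's Lemma~\ref{lemma-A} avoids this by exploiting that admissibility \eqref{eq:balancecond} holds for \emph{all} exponents: one bounds $\sup_x z(|x|)^{2m+1}w(|x|)^{-(\alpha''-\alpha')}$ by $\bigl(\sup_x z(|x|)^{p(2m+1)}w(|x|)^{-p(\alpha''-\alpha')}\bigr)^{1/p}\leq \bigl(D/(p(\alpha''-\alpha'))\bigr)^{1/p}$, yielding $q=1/p<1$; this is where the free parameter $p$, the order $\rho=p/(p-1)$ and the stated type actually come from, and your argument as written never produces it. A minor related slip: the sum $\sum_{y\sim x}(n_xn_y)^m$ has $n_x\leq z(|x|)$ terms, so the correct exponent is $z(|x|)^{2m+1}$, not $z(|x|)^{2m}$; this is harmless since admissibility holds for every $\mu>0$, but it should be tracked if you want the constants in \eqref{eq:est21} and the type $\sigma$ to match the statement.
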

\begin{remark}\label{rem:largesigma}
Clearly, by choosing $p>1$ large enough, one can make the order $\rho>1$ arbitrary close to $1$.
\end{remark}
\begin{remark}The order and type of $C_p$ are positive and finite. Thus, for any $\varepsilon>0$, there exists $T_\varepsilon>0$ such that
\[
\sup_{t\in[0,T]}\lVert \bar{q}(t)\rVert_{\beta} \leq e^{(\sigma+\varepsilon)T^{\rho+\varepsilon}}
(\lVert \bar{q}_0\rVert_{\alpha}^j+1), \quad
T>T_\varepsilon.
\]
\end{remark}

\subsection{Discussion of Condition~\ref{cond:balance} and examples}\label{subsec:disc-ex}

Our next goal is to explain in more detail the definition of admissible pairs.

\medskip
We start with the discussion of the class $\mathcal R$.
Let $f:\R_+\to[1,\infty)$ be a non-decreasing function. Then, for
each $\tau>0$, the function $\frac{f(\tau+s)}{f(s)}$ is bounded in $s$ on any closed subinterval of $\R_+$. Thus a sufficient condition for \eqref{eq:condw} is that $\lim\limits_{s\to\infty} \frac{f(\tau+s)}{f(s)}<\infty$, $\tau>0$. The latter, by e.g. \cite[Theorem~1.4.1]{BGT1987} applied to the function $g(s):=f(\log s)$, $s>1$, is equivalent to the requirment that $g$ has regular variation of index $\rho\in\R$, i.e. $\lim\limits_{s\to\infty} \frac{g(\tau s)}{g(s)}=\tau^\rho$, $\tau>0$. Combining \cite[Theorem~1.4.1]{BGT1987} and the examples of \cite[p.~16]{BGT1987}, we conclude that a function $g(s)=s^\rho h(s)$, $\rho\in\R$, has regular variation of index $\rho$, if e.g. $h(s)$ is a product of non-negative powers of $\log s$, $\log\log s$ (and so on), $e^{(\log s)^\nu}$, $\nu\in(0,1)$ etc.
Considering $f(s):=g(e^s)$, $s\in\R_+$, we get that e.g. the functions
\[
	f(s)= (\log s)^\kappa s^\nu e^{\rho s^\mu}, \quad \rho, \kappa,\nu\geq0, \ \mu\in(0,1]
\]
belong to the set $\mathcal{R}$, as well as the functions $\log f(s)$, $\log\log f(s)$ and so on.

\medskip
The following Lemma describes a simple way to generate admissible pairs.

\begin{lemma}\label{rem:doublelog}
\begin{enumerate}[label=(\arabic*)]
\item Let $w\in\mathcal{R}$, $w\geq e^e$, and  $z$ be given by the formula
\[
	z(s)=\upsilon \log(\log w(s)),  \quad \upsilon >0.
\]
Then the pair $(w,z)$  is admissible.
\item Let $(w,z)$  be an admissible pair. Then the pair $(w+c_1,z+c_2)$ is admissible for any $c_1,c_2\ge 0$.
\item
Let $(w,z)$  be an admissible pair, $v\in\mathcal{R}$ and $v(s)\geq w(s)$, $s\in\R_+$. Then $(v,z)$ is an admissible pair.
\end{enumerate}
\end{lemma}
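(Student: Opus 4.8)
The plan is to prove the three parts separately, in increasing order of difficulty; in each case one must verify that the two functions forming the pair lie in $\mathcal R$ and establish the bound \eqref{eq:balancecond} with $D$ depending only on $\beta,\mu$ (not on $\alpha$). Part (3) is essentially free: $v\in\mathcal R$ is assumed and $z\in\mathcal R$ is part of the admissibility of $(w,z)$, while from $v\ge w$ and monotonicity of $t\mapsto t^{-\alpha}$ one gets $\sup_s z(s)^{\mu}v(s)^{-\alpha}\le\sup_s z(s)^{\mu}w(s)^{-\alpha}\le D(\beta,\mu)/\alpha$, so $(v,z)$ is admissible with the same constant. For (2) one first checks $w+c_1,z+c_2\in\mathcal R$ — e.g., since $w_\tau\ge1$ one has $\frac{w(\tau+s)+c_1}{w(s)+c_1}\le\frac{w_\tau(w(s)+c_1)}{w(s)+c_1}=w_\tau<\infty$, and similarly for $z+c_2$ — and then, using $z\ge 1$ (hence $z+c_2\le(1+c_2)z$) together with $w+c_1\ge w$,
\[
\sup_s(z(s)+c_2)^{\mu}(w(s)+c_1)^{-\alpha}\le(1+c_2)^{\mu}\sup_s z(s)^{\mu}w(s)^{-\alpha}\le(1+c_2)^{\mu}\frac{D(\beta,\mu)}{\alpha},
\]
so $(w+c_1,z+c_2)$ is admissible with $D'(\beta,\mu)=\max\{1,(1+c_2)^{\mu}D(\beta,\mu)\}$.

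The substantive part is (1). Fix $\beta,\mu>0$ and $\alpha\in(0,\beta)$. The key step is the substitution $t:=\log w(s)$, which ranges over $[e,\infty)$ because $w\ge e^e$; this turns $w(s)^{-\alpha}=e^{-\alpha t}$ into a pure exponential and $z(s)^{\mu}$ into $\upsilon^{\mu}(\log t)^{\mu}$, so that
\[
\sup_{s\in\R_+}z(s)^{\mu}w(s)^{-\alpha}=\upsilon^{\mu}\sup_{t\ge e}(\log t)^{\mu}e^{-\alpha t}.
\]
Since $(\log t)^{\mu}/t$ is continuous on $[e,\infty)$ and tends to $0$ at infinity, $C_\mu:=\sup_{t\ge e}(\log t)^{\mu}/t<\infty$ and depends only on $\mu$; combining $(\log t)^{\mu}\le C_\mu t$ with the elementary identity $\sup_{t>0}te^{-\alpha t}=(e\alpha)^{-1}$ gives $\sup_{t\ge e}(\log t)^{\mu}e^{-\alpha t}\le C_\mu/(e\alpha)$. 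Hence \eqref{eq:balancecond} holds with $D(\beta,\mu)=\max\{1,\upsilon^{\mu}C_\mu/e\}$, independent of $\alpha$ (and even of $\beta$). The remaining check, $z\in\mathcal R$, is routine: $z$ is non-decreasing, and from $\log w(\tau+s)\le\log w(s)+\log w_\tau\le(1+\tfrac1e\log w_\tau)\log w(s)$ (using $\log w\ge e$) one obtains $z(\tau+s)\le z(s)+\upsilon\log(1+\tfrac1e\log w_\tau)$, whence $z_\tau\le 1+\log(1+\tfrac1e\log w_\tau)<\infty$ (since $z\ge\upsilon$).

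The main point to flag is that there is no deep obstacle here — the only real content is the one-variable estimate in (1) — but that estimate is exactly what calibrates how fast $z$ (equivalently, the neighbour counts $n_x$) may be allowed to grow. The factor $1/\alpha$ demanded on the right of \eqref{eq:balancecond} is produced precisely by $\sup_{t>0}te^{-\alpha t}=(e\alpha)^{-1}$, and it can absorb the slowly growing factor $(\log t)^{\mu}$ coming from $z^{\mu}$ but little more: had one taken $z=\log w$ instead of $\log\log w$, the same substitution would give $\sup_t t^{\mu}e^{-\alpha t}=(\mu/e\alpha)^{\mu}$, which is not $O(1/\alpha)$ as $\alpha\downarrow0$ once $\mu>1$. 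This is why the double logarithm is the natural threshold in Condition~\ref{cond:balance}, and it is what makes the $\log|x|$-type vertex degrees mentioned in the introduction admissible.
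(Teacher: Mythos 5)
Your proof is correct and follows essentially the same route as the paper: the heart of the argument is the elementary bound $(\log t)^{\mu}\leq C_\mu t$ applied to $t=\log w(s)$ combined with $\sup_{t>0}te^{-\alpha t}=(e\alpha)^{-1}$, which is exactly the paper's proof of part (1), while the paper simply declares parts (2) and (3) obvious (you spell them out correctly). The only cosmetic slip is writing an equality for $\sup_{s}z(s)^{\mu}w(s)^{-\alpha}=\upsilon^{\mu}\sup_{t\geq e}(\log t)^{\mu}e^{-\alpha t}$ where the image of $\log w$ need not be all of $[e,\infty)$; since you only need the inequality $\leq$, this is harmless.
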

\begin{proof} As we already pointed out,   $z\in\mathcal R$.
It is straightforward to check that, for any $\mu>0$, there exists $d_\mu>0$, such that $(\log \tau)^\mu \leq d_\mu \tau$ for all $\tau>0$. Then, denoting $v(s):=\log w(s)\geq 0$, $s\in\R_+$, we obtain
\[
	z(s)^{\mu} w(s)^{-\alpha}\leq \upsilon^\mu d_\mu  \bigl(\log w(s)\bigr) w(s)^{-\alpha}=\upsilon^\mu d_\mu v(s) e^{-\alpha v(s)}\leq \frac{\upsilon^\mu d_\mu}{e \alpha},
\]
which completes the proof of part (1).
Parts (2) and (3) are  obvious.
\end{proof}

\begin{remark}\label{rem:obviousbut}
In (1), it is sufficient to assume that $w(s)\geq e^e$, $s\geq s_0$ for some $s_0>0$ only, and set e.g. $w(s)=1$ for $s\in[0,s_0)$. Then one can choose $z(s)=\upsilon \log(\log(w(s)))$ for $s\geq s_0$ and $z(s)=1$ otherwise.
\end{remark}

Let us note that the  structure of the given underlying set $\gamma$ dictates the choice of the function $z$, which in turn determines suitable weight function $w$ and, ultimately, the conditions on the family $F_x,\,x\in\gamma$. Below are three examples of admissible pairs associated with different type of $\gamma$.
\begin{example} [Minimal growth] \label{ex:min}
Assume that the number of elements in $\gamma _{x,r}$ is globally bounded,
that is, there exists a constant $z_{0}\geq1$ such that $n_{x}\leq z_{0}$ for all $x\in \gamma $. An important example of such $\gamma$ is given by the integer lattice  $\mathbb{Z}^{d}$.
 Then we can set $z(s)\equiv z_{0}$ and
choose \emph{an arbitrary} non-decreasing function $w:\R_+\to[1,\infty)$ such that $w(s)\geq e^e$ for $s\geq s_0$, cf. Remark~\ref{rem:obviousbut}. The choice of $w$ is dictated by the growth of the initial condition $\bar{q}_{0}=(q_{x,0})_{x\in\gamma}\in S_{\alpha }^{\gamma }$, cf.~Theorem~\ref{theor-main} (i.e. by the growth of $|q_{x,0}|$ in $x\in\gamma$).
Note also that our
 main technical tool, the modified Ovsyannikov theorem (cf. Theorem~\ref{Th-ovs} and Proposition~\ref{theLemma}), becomes
then  redundant, as the corresponding linear operator $A$ defined by formula \eqref{eq:defA} will be bounded in any such (fixed) $S_{\alpha }^{\gamma }$, $\alpha>0$ (i.e. the dynamics evolves in one Banach space).
This case is well-studied, see \cite{LLL},
where $w(s)=e^s$ (and $\alpha=1$) was considered.
In~particular, if $ (|q_{x,0}|)_{x\in\gamma}$ is bounded, one can choose $w(s)\equiv e^e$; then all the spaces $S^\gamma_\alpha$, $\alpha>0$ will
coincide with the space $l_\infty(S)$ of bounded sequences (and $A$ is bounded there).
\end{example}

\begin{example}[Maximal growth]\label{ex:max}
Assume that the number of elements in $\gamma _{x,r}$ grows logarithmically,
that is, there exists $a(r)>0$ such that
\begin{equation}\label{nz}
n_{x,r}(\gamma )\leq z(\left\vert x\right\vert),\qquad z(s):=
a(r) \left(1+\log (1+s)\right)
\end{equation}
for all $x\in X$ and $r>0$. This bound holds for a typical random
configuration distributed according to a Ruelle measure on $X$, see e.g.
\cite{R70} and \cite[p.~1047]{K93}.
Then we can set $w(s)=e^s$, $s\geq e$, cf. Remark~\ref{rem:obviousbut}. It follows from Lemma~\ref{rem:doublelog} that the pair $(w,z)$ is admissible. This is our most important motivating case, see Section~\ref{IAS} and Example~\ref{ex:maxrev}.
\end{example}

\begin{example}[Medium growth]\label{ex:med}
Let,
for some $\upsilon >0$,
\begin{equation}\label{mednz}
		z(s)=\upsilon \left( 1+\log \log (e+s)\right).
\end{equation}
Then, according to Lemma~\ref{rem:doublelog}, we can
set $w(s)=1+s$.  In this case, more comprehensive study of the solutions of system \eqref{ODE} can be accomplished, at least in the framework of systems with pair interaction, which we discuss in Section~\ref{IAS} (see Example~\ref{ex:medrev}). Indeed, slow growth of the weight function $w$ allows to show the uniqueness of the solution living in $S^\gamma_\beta$ with any (fixed) $\beta>0$.
\end{example}

\subsection{Scheme of the proof}

To explain the scheme of the proof of Theorem~\ref{theor-main}, we start with the following simple observation. Let $\bar{q}(t)$ solve \eqref{ODE}, and define
\begin{equation}\label{eq:L}
	\L_x(t):=U_x(q_x(t)), \quad x\in\gamma,\ t\geq0.
\end{equation}
Then Condition~\ref{cond:diss} yields
\begin{align}
\dt\L_x(t)&=\dt q_x(t)\cdot \nabla U_x(q_x(t))=F_x(\bar{q}(t))\cdot \nabla U_x(q_x(t))\notag\\
&\leq C \sum_{y\sim x} (n_x n_y)^m U_y(q_y)=\sum_{y\in\gamma} A_{x,y} \L_y(t),\label{eq:major}
\end{align}
where
\begin{equation}\label{eq:defA}
A_{x,y}:=\begin{cases}
						C (n_x n_y)^m, & y\sim x,\\
						0, &\text{otherwise}.
					\end{cases}
\end{equation}
Introduce the infinite matrix $A=(A_{x,y})_{x,y\in\gamma}$. Relation \eqref{eq:major} implies  the following differential inequality in $S^\gamma$:
\begin{equation}\label{eq:diffineq}
	\dt\L(t)\leq A \L(t), \quad \L(t)=(\L_x(t))_{x\in\gamma}.
\end{equation}
Now we can proceed as follows. First we will prove that, for any $0<\alpha<\beta$, the differential equation
\begin{equation}\label{eq:dtPsi}
\dt \Psi(t)=A\Psi(t) ,\quad \Psi (0)\in S_\alpha^\gamma,
\end{equation}
which corresponds to the inequality \eqref{eq:diffineq}, has a  classical  solution with infinite lifetime in  $S_\beta^\gamma$ (Proposition~\ref{theLemma}). To this end, we use a modified version of Ovsyannikov's method (proved in Theorem~\ref{Th-ovs}).
Informally, we will have then that $\L_x(t)\leq \Psi_x(t)$, $x\in\gamma$, $t\geq0$ (here and below we understand inequalities between elements of $S=\R^\nu$ in the coordinate-wise sense).
Next, we will  approximate \eqref{ODE} by finite volume cut-off systems and use the corresponding finite-dimensional versions of \eqref{eq:diffineq} and \eqref{eq:dtPsi} to find (uniform in volume $\Lambda\subset X$) estimates, which will allow us to pass to a limit  as $\Lambda \rightarrow X$.

\section{Linear equations in a scale of Banach spaces}

In this section we prove a general result on the existence of (infinite-time) solutions for a special class of  linear differential equations, which extends the so-called Ovsyannikov method, see e.g. \cite{Deim}.

Let us consider a family of Banach spaces $\B_{\alpha }$ indexed
by $\alpha \in   (  0,\beta ] $ with $\beta <\infty $ fixed, and
denote by $\left\Vert \cdot \right\Vert _{\alpha }$ the corresponding norms.
We assume that
\begin{equation}
\B_{\alpha ^{\prime }}\subset \B_{\alpha ^{\prime \prime }}\ \text{and }
\left\Vert u\right\Vert _{\alpha ^{\prime \prime }}\leq \left\Vert
u\right\Vert _{\alpha ^{\prime }}\text{ if }\alpha ^{\prime }<\alpha
^{\prime \prime },\ u\in \B_{\alpha ^{\prime \prime }},  \label{scale}
\end{equation}
where the embedding means that $\B_{\alpha ^{\prime }}$ is a vector subspace
of $\B_{\alpha ^{\prime \prime }}$. For any $\delta\in(0,\beta]$, we set
$\B_{\delta-}:=\bigcup\limits_{0<\alpha <\delta }\B_{\alpha }$.

Let $A:\B_{\beta-}\rightarrow \B_{\beta-}$ be a linear operator. We assume that $A$ is a bounded operator acting from $\B_{\alpha'}$ to $\B_{\alpha''}$ for any $0< \alpha'<\alpha'' \leq\beta  $, and
\begin{equation}
\left\Vert Ax\right\Vert _{\alpha ^{\prime \prime }}\leq c\left(   \alpha''- \alpha'  \right) ^{-q}\left\Vert x\right\Vert _{\alpha ^{\prime }}
\label{A-cond}
\end{equation}
for all $x\in \B_{\alpha ^{\prime }}$ and some constants $c=c(\beta)>0$ and $q\in (0,1)
$ (both independent of $\alpha ^{\prime }$ and $\alpha ^{\prime \prime }$).

\begin{theorem}
\label{Th-ovs} Assume that the bound (\ref{A-cond}) holds. Then, for any $  \alpha \in (0,\beta) $
and $u_{0}\in \B_{\alpha }$, there exists   a continuous function $
u:[0,\infty )\rightarrow \B_{\beta }$ with $u(0)=u_0$  such that:
\begin{enumerate}[label=\arabic*)]
\item $u$ is continuously differentiable on $(0,\infty )$;
\item $Au(t)\in \B_{\beta }$ for all $t\in (0,\infty )$;
\item $u$ solves the differential equation
\begin{equation}
\dfrac{d}{dt}u(t)=Au(t),\quad t\in (0,\infty ).  \label{IVP}
\end{equation}
\end{enumerate}
  Moreover,
\begin{equation}
\lVert u(t)\rVert_\beta\leq a(t) \lVert u_0\rVert_\alpha, \quad t>0,
\end{equation}
where $a(t)$ is an entire function of order $\rho=\frac{1}{1-q}$ and type $\sigma=(ce)^\rho (e\rho)^{-1}(\beta- \alpha)^{-q \rho}$.
\end{theorem}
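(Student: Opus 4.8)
The plan is to construct the solution as the sum of a formal power series $u(t)=\sum_{n\ge 0}\frac{t^n}{n!}A^n u_0$ and to show, via the bound \eqref{A-cond}, that this series converges in $\B_\beta$ for every $t\ge 0$, with the claimed growth. The crucial device is the classical Ovsyannikov trick of splitting the interval $(\alpha,\beta)$ into $n$ equal pieces. First I would fix $\alpha\in(0,\beta)$, $u_0\in\B_\alpha$, and set $\alpha_k:=\alpha+\frac{k}{n}(\beta-\alpha)$ for $k=0,\dots,n$, so that each single application of $A$ moves us from $\B_{\alpha_{k-1}}$ to $\B_{\alpha_k}$ at cost $c\bigl(\alpha_k-\alpha_{k-1}\bigr)^{-q}=c\bigl(\tfrac{\beta-\alpha}{n}\bigr)^{-q}$. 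Iterating $n$ times gives
\[
\lVert A^n u_0\rVert_\beta\le c^n\Bigl(\frac{n}{\beta-\alpha}\Bigr)^{qn}\lVert u_0\rVert_\alpha,
\]
and hence the $n$-th term of the series is bounded by $\frac{t^n}{n!}c^n n^{qn}(\beta-\alpha)^{-qn}\lVert u_0\rVert_\alpha$. Using $n!\ge (n/e)^n$ we get a term bound of the form $\bigl(\text{const}\cdot t\bigr)^n n^{-(1-q)n}(\ldots)$, whose $n$-th root tends to $0$; this is where the hypothesis $q\in(0,1)$ is essential. Summing, one identifies the resulting majorant as an entire function $a(t)$, and a standard computation of $\limsup_n \bigl(\text{coefficient}\bigr)^{1/n}$-type quantities yields order $\rho=\frac{1}{1-q}$ and type $\sigma=(ce)^\rho(e\rho)^{-1}(\beta-\alpha)^{-q\rho}$, via the formula relating the type of $\sum a_n t^n$ to $\limsup_n n\lvert a_n\rvert^{\rho/n}$.

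Next I would verify that the series actually defines a solution. Continuity of $u:[0,\infty)\to\B_\beta$ and the bound $\lVert u(t)\rVert_\beta\le a(t)\lVert u_0\rVert_\alpha$ follow at once from the majorant. For differentiability on $(0,\infty)$ and the identity $\frac{d}{dt}u(t)=Au(t)$, the point is that the term-by-term differentiated series $\sum_{n\ge 1}\frac{t^{n-1}}{(n-1)!}A^n u_0$ is, by the same estimate with one extra application of $A$, uniformly convergent in $\B_\beta$ on compact $t$-intervals — here I would use \eqref{A-cond} once more to pass from $\B_{\alpha_{n-1}}$, where $A^{n-1}u_0$ lives after splitting into $n-1$ pieces, into $\B_\beta$. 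This simultaneously shows $Au(t)\in\B_\beta$ (item 2), since $Au(t)=\sum_{n\ge 0}\frac{t^n}{n!}A^{n+1}u_0$ converges in $\B_\beta$. Then $u'(t)=Au(t)$ is the statement that the differentiated series equals $A$ applied termwise, which is justified because $A$ is bounded $\B_{\alpha'}\to\B_{\alpha''}$ and the partial sums converge in a space continuously embedded in $\B_\beta$; the initial condition $u(0)=u_0$ is the $n=0$ term. Continuity of $u'$ on $(0,\infty)$ is again inherited from local uniform convergence.

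The main obstacle, and the sole place where this differs from the textbook Ovsyannikov lemma, is obtaining a bound valid for \emph{all} $t\ge 0$ rather than only for $t$ in a small interval $[0,\varepsilon)$. In the classical argument one does not re-split into $n$ equal pieces but instead works on a shrinking sequence of radii, which forces a geometric-series constraint and hence a finite lifetime; the improvement here is precisely that because $q<1$ the factor $n^{-(1-q)n}$ in the term bound beats any exponential $(\text{const}\cdot t)^n$, so the $n$-equal-pieces estimate converges for every $t$. Thus the real work is the careful bookkeeping of constants to extract the stated order $\rho$ and type $\sigma$ — in particular, keeping track of the factor $(\beta-\alpha)^{-q}$ through the $n$-fold iteration and matching it against the normalization of the type of an entire function of fractional order. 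I expect no difficulty with the functional-analytic points (boundedness of $A$ between the scale spaces makes all term-by-term operations legitimate), and the entire burden is this quantitative estimate.
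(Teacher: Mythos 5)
Your proposal is correct and follows essentially the same route as the paper: the exponential series $\sum_n \frac{t^n}{n!}A^n u_0$, the equal-splitting estimate $\lVert A^n u_0\rVert\le c^n n^{qn}(\beta-\alpha)^{-qn}\lVert u_0\rVert_\alpha$, the use of $n!\ge (n/e)^n$ with $q<1$ to get an entire majorant of order $\rho=\frac{1}{1-q}$ and the stated type, and term-by-term differentiation justified by convergence in an intermediate space $\B_\delta$, $\delta<\beta$, from which $A$ maps boundedly into $\B_\beta$. The only cosmetic difference is that the paper runs the splitting up to an intermediate index $\delta$ and then lets $\delta\to\beta$ in the estimate, whereas you split directly up to $\beta$; both yield the same constants.
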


\begin{proof}
Let us first observe that, by (\ref{A-cond}), $A$ is a well-defined operator in any $\B_{\delta -}$, $\delta \in (\alpha ,\beta )$.
 Thus, for any $n\geq 1$,   one can consider  $A^{n}:\B_{\delta -}\rightarrow \B_{\delta -}$. Embeddings $\B_{\alpha }\subset
\B_{\delta -}\subset \B_{\delta }$ imply that $A^{n}:\B_{\alpha }\rightarrow
\B_{\delta }$ is a well-defined   linear  operator.

Fix $u_{0}\in \B_{\alpha }$ and define the sequence $u_k=A^ku_0\,(=Au_{k-1})$, $k=1,2,...$. Then,
for any $n\ge 1$ and $\alpha _{1},...,\alpha _{n}$ such that $\alpha <\alpha
_{1}<...<\alpha _{n}=\delta $, we have  $ u_{k} \in \B_{\alpha_k-}\subset   \B_{\alpha _{k}}$ and
\[
\left\Vert u_{k}\right\Vert _{\alpha _{k}}\leq c\left( \alpha _{k}-\alpha
_{k-1}\right) ^{-q}\left\Vert u_{k-1}\right\Vert _{\alpha _{k-1}}.
\]
Setting $\alpha _{k}:=\alpha +k\frac{\delta -\alpha }{n}$, we obtain the estimate
\begin{equation*}
\left\Vert A^{n}u_{0}\right\Vert _{\delta }\leq D^{n}n^{qn}\left\Vert
u_{0}\right\Vert _{\alpha },\quad D:=c(\delta -\alpha )^{-q}, \quad   n\geq 1.
\end{equation*}
  Therefore the series
\begin{equation}
u(t):=\sum_{n=0}^{\infty }\frac{t^{n}}{n!}A^{n}u_0  \label{u-t}
\end{equation}
is uniformly convergent in $\B_\delta$ on the interval $[0,T]$  for each $T>0$. Indeed,  the inequality $n!\geq \bigl(\frac{n}{e}\bigr)^n$ implies that
\begin{equation}\label{entire}
	 \lVert u(t)\rVert_\delta\leq \lVert u_0\rVert_\alpha\sum_{n=0}^{\infty } \frac{(De)^n}{n^{(1-q)n}}t^n<\infty
\end{equation}
for any $t>0$, and the series in the r.h.s. of \eqref{entire} defines an entire function.

Similarly, the series
\begin{equation*}
v(t):=\sum_{n=0}^{\infty }\frac{t^{n}}{n!}A^{n+1}u_0
\end{equation*}
is uniformly convergent in $\B_\delta$ on the interval $[0,T]$. As a result,
\begin{equation}\label{visdtu}
v(t)=\frac{d}{dt}u(t)
\end{equation}
in $\B_{\delta }$. Since the norm in $\B_\delta$ is stronger than
in $\B_\beta$, the function $[0,T]\ni t\mapsto u(t)$ is differentiable in  $\B_\beta$ and \eqref{visdtu} holds in $\B_\beta$, too.
Observe that $A$ maps $\B_{\delta }\to \B_{\beta }$ continuously.  Therefore, we can apply $A$ to the right hand side of \eqref{u-t} and obtain $Au(t)=v(t)\in \B_{\beta }$. This proves parts 1) -- 3) of the statement.

Next, fix any $\delta_0\in(\alpha,\beta)$. The series in the r.h.s. of \eqref{entire} converges uniformly in $\delta\in[\delta_0,\beta]$ (recall that $D$ depends on $\delta$) to a continuous decreasing function of $\delta$. Therefore, one can pass to the limit as $\delta\to\beta$ in \eqref{entire}, to get
\begin{equation}\label{entire1}
	\lVert u(t)\rVert_\beta\leq \lVert u_0\rVert_\alpha\sum_{n=0}^{\infty } \frac{(Be)^n}{n^{(1-q)n}}t^n<\infty, \quad B:=c(\beta -\alpha )^{-q}.
\end{equation}
The sum of the series in the r.h.s. of \eqref{entire1} is an entire function of order
\[
\rho:=\limsup_{n\to\infty}\frac{n\ln n}{-\ln\frac{(Be)^n}{n^{(1-q)n}}}=\frac{1}{1-q}.
\]
The type $\sigma $ of this entire function satisfies the equality
\[
(\sigma e\rho)^{\frac{1}{\rho}}=\limsup_{n\to\infty}n^{\frac{1}{\rho}}\biggl(\frac{(Be)^n}{n^{(1-q)n}}\biggr)^{\frac{1}{n}} =Be,
\]
i.e.
\[
	\sigma=\frac{(Be)^\rho}{e\rho}=\frac{(ce)^\rho}{e\rho(\beta- \alpha)^{q \rho}},
\]
which completes the proof.
\end{proof}

\begin{remark}
The order and type of $a(t)$ are positive and finite. Thus, for any $\varepsilon>0$, there exists $T_\varepsilon>0$ such that
\[
\sup_{t\in[0,T]}\lVert u(t)\rVert_\beta \leq e^{(\sigma+\varepsilon)T^{\rho+\varepsilon}}\lVert u_0\rVert_{\alpha}
\]
for all $T>T_\varepsilon$.
\end{remark}

\begin{remark}
The function $u$ from Theorem~\ref{Th-ovs} is unique. The proof can be obtained in a similar way to the ``standard'' Ovsyannikov method, see e.g. \cite[Theorem~2.4]{Fink}. Note that we do not use this fact in the present paper.
\end{remark}

\section{Proof of the existence result}
\subsection{Row-finite linear systems}

We start with the study of the linear system \eqref{eq:dtPsi}.  First, we show that the matrix  \eqref{eq:defA} generates a bounded operator in the scale $(S_\alpha^\gamma)_{\alpha>0}$.
\begin{lemma} \label{lemma-A}
Let Condition~\ref{cond:balance} hold.
Then the matrix $\left( A_{x,y}\right) _{x,y\in \gamma }$, given by \eqref{eq:defA}, generates a
bounded   linear   operator $A:S_{\alpha' }^{\gamma }\rightarrow S_{\alpha'' }^{\gamma }$
for any $0<\alpha' <\alpha'' <\beta$.   The corresponding norm of $A$  satisfies the estimate
\begin{equation}
\left\Vert A\right\Vert _{\alpha', \alpha'' }\leq   B w_r^{\alpha'}  \left( \alpha''-\alpha' \right) ^{-1/p}\label{eq:est21}
\end{equation}
with an arbitrary $p>1$ and some $B=B(p)>0$.
\end{lemma}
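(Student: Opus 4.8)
The plan is to estimate the operator norm of $A$ directly from the definition of the spaces $S^\gamma_\alpha$ in \eqref{defSalphagamma}, reducing everything to a row-sum bound and then invoking the admissibility of $(w,z)$. Fix $0<\alpha'<\alpha''<\beta$ and take $\bar q\in S^\gamma_{\alpha'}$. For each $x\in\gamma$ I compute, using \eqref{eq:defA} and the fact that the sum over $y\sim x$ has exactly $n_x$ terms,
\[
|(A\bar q)_x| \le \sum_{y\sim x} A_{x,y}|q_y|
= C\sum_{y\sim x} (n_xn_y)^m |q_y|
\le C\,\lVert\bar q\rVert_{\alpha'} \sum_{y\sim x} (n_xn_y)^m w(|y|)^{\alpha'}.
\]
Since $y\sim x$ means $|x-y|\le r$, I have $|y|\le|x|+r$, and because $w$ is non-decreasing and lies in $\mathcal R$, $w(|y|)\le w(|x|+r)\le w_r\, w(|x|)$ with $w_r$ as in \eqref{eq:condw}. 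Also $n_y\le z(|y|)\le z(|x|+r)\le z_r\,z(|x|)$ by Condition~\ref{cond:balance} and $z\in\mathcal R$, and likewise $n_x\le z(|x|)$; so $(n_xn_y)^m\le (z_rz(|x|)^2)^m = z_r^m z(|x|)^{2m}$. Putting this together and using $n_x\le z(|x|)$ once more to bound the number of summands, the whole sum over $y\sim x$ is at most $C z(|x|)\cdot z_r^m z(|x|)^{2m}\cdot w_r^{\alpha'} w(|x|)^{\alpha'} = C z_r^m w_r^{\alpha'}\, z(|x|)^{2m+1} w(|x|)^{\alpha'}$.

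Dividing by $w(|x|)^{\alpha''}$ and taking the supremum over $x$, I obtain
\[
\lVert A\bar q\rVert_{\alpha''}
\le C z_r^m w_r^{\alpha'}\lVert\bar q\rVert_{\alpha'}\;
\sup_{x\in\gamma} z(|x|)^{2m+1}\, w(|x|)^{-(\alpha''-\alpha')}.
\]
Now I apply admissibility of $(w,z)$ with $\mu:=2m+1$ and with the pair of parameters $(\beta,\mu)$: writing $\alpha''-\alpha'=:\alpha\in(0,\beta)$, \eqref{eq:balancecond} gives $\sup_s z(s)^{\mu}w(s)^{-(\alpha''-\alpha')}\le D(\beta,\mu)/(\alpha''-\alpha')$. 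Hence
\[
\lVert A\bar q\rVert_{\alpha''}\le C z_r^m D(\beta,\mu)\, w_r^{\alpha'}\,(\alpha''-\alpha')^{-1}\lVert\bar q\rVert_{\alpha'},
\]
which is already of the claimed form \eqref{eq:est21} but with exponent $1$ rather than $1/p$ on $(\alpha''-\alpha')^{-1}$. To get the sharper exponent $1/p$ for arbitrary $p>1$, I trade a power of $(\alpha''-\alpha')$ for a constant: since $0<\alpha''-\alpha'<\beta$, we have $(\alpha''-\alpha')^{-1}=(\alpha''-\alpha')^{-1/p}\cdot(\alpha''-\alpha')^{-(1-1/p)}\le \beta^{1-1/p}(\alpha''-\alpha')^{-1/p}$ — wait, that inequality goes the wrong way, so instead I use admissibility with a different exponent. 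Concretely, I bound $z(|x|)^{2m+1}w(|x|)^{-(\alpha''-\alpha')}$ by splitting the exponent of $w$: write it as $\bigl(z(|x|)^{(2m+1)p}w(|x|)^{-(\alpha''-\alpha')}\bigr)^{1/p}\cdot w(|x|)^{-(\alpha''-\alpha')(1-1/p)}$, apply admissibility with $\mu:=(2m+1)p$ to the first factor (giving $(D/(\alpha''-\alpha'))^{1/p}$) and note the second factor is $\le 1$ since $w\ge 1$. This yields $\sup_x z(|x|)^{2m+1}w(|x|)^{-(\alpha''-\alpha')}\le D^{1/p}(\alpha''-\alpha')^{-1/p}$, and absorbing $C z_r^m D^{1/p}$ together with the factor $w_r^{\alpha'}$ (noting $w_r=w_r(\beta)$ depends only on $r$, hence is a constant) into $B=B(p)$ gives \eqref{eq:est21}. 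Boundedness of $A:S^\gamma_{\alpha'}\to S^\gamma_{\alpha''}$ is then immediate, and linearity is clear from the matrix form.

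The main technical point — and the only place the hypotheses do real work — is the passage from the elementary row-sum bound (which produces a factor $(\alpha''-\alpha')^{-1}$ and a polynomial-in-$z$ weight) to the scale estimate \eqref{eq:est21}: this is exactly what the admissibility condition \eqref{eq:balancecond} is designed to deliver, and the freedom to choose $p>1$ comes from feeding the exponent $\mu=(2m+1)p$ into admissibility rather than $\mu=2m+1$. One should double-check that the constant $D$ from \eqref{eq:balancecond} is allowed to depend on $\mu$ (it is, by the definition of admissible pair) and that $w_r$, $z_r$ depend only on $r$ (they do, by $w,z\in\mathcal R$); these are the routine bookkeeping items, not genuine obstacles.
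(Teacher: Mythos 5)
Your proof is correct and follows essentially the same route as the paper: the same row-sum estimate with the $\mathcal{R}$-properties of $w$ and $z$ giving the factor $w_r^{\alpha'}z_r^m z(|x|)^{2m+1}w(|x|)^{-(\alpha''-\alpha')}$, and the same trick of feeding the inflated exponent $p(2m+1)$ into the admissibility bound \eqref{eq:balancecond} and taking the $1/p$-th root. The only cosmetic difference is that the paper applies \eqref{eq:balancecond} with parameters $\bigl(p\beta,p(2m+1)\bigr)$ and $\alpha=p(\alpha''-\alpha')$, whereas you keep $\alpha=\alpha''-\alpha'$ and discard the leftover factor $w^{-(\alpha''-\alpha')(1-1/p)}\leq 1$; also keep $w_r^{\alpha'}$ explicit rather than absorbing it into $B$, so that $B$ depends only on $p$ as stated.
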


\begin{proof}
Fix arbitrary $0<\alpha' <\alpha'' $.
For any $\bar{q}\in S_{\alpha' }^{\gamma}$, we have
\begin{equation*}
\left\Vert A\bar{q}\right\Vert _{\alpha'' }\leq C \sup_{x\in \gamma } \sum_{y\sim x}(n_x n_y)^m\lvert q_{y}\rvert \,
w(|x|)^{-(\alpha''-\alpha' )}w(|x|)^{-\alpha'},
\end{equation*}
by \eqref{defSalphagamma}, \eqref{eq:defA}.
Recall that $y\sim x$ implies $\left\vert y-x\right\vert \leq r$, and hence
\begin{equation}\label{eq:simple}
\lvert y\rvert \leq r+\lvert x\rvert.
\end{equation}
Then, since $w$ is  and satisfies \eqref{eq:condw}, we have
\[
	w(|x|)^{-\alpha'}\leq \biggl(\frac{w(r+\lvert x\rvert)}{w(|x|)}\biggr)^{\alpha'}\,w(|y|)^{-\alpha'}\leq w_r^{\alpha'} w(|y|)^{-\alpha'},
\]
where $w_r$ is as in \eqref{eq:condw},  so that
\begin{align}
\left\Vert A\bar{q}\right\Vert _{\alpha'' }&\leq Cw_r^{\alpha'}\sup_{x\in \gamma
}w(|x|)^{-(\alpha''-\alpha' )}\sum_{y\sim x}(n_x n_y)^m\lvert q_{y}\rvert w(|y|)^{-\alpha'}\notag \\& \leq w_r^{\alpha'}\lVert \bar{q} \rVert _{\alpha' }\sup_{x\in \gamma
}\left( A_{x}w(|x|)^{-(\alpha''-\alpha' )}\right) ,\label{eq:est1}
\end{align}
where
\begin{equation}
	A_{x}:=C\sum\limits_{y\sim x }(n_x n_y)^m, \quad x\in\gamma. \label{eq:est2}
\end{equation}
By \eqref{eq:nx}, \eqref{eq:simple} and \eqref{eq:condw}, we have
\[
	n_y\leq z(|y|)\leq \frac{z(r+|x|)}{z(x)} z(|x|)\leq z_r z(|x|), \quad y\sim x, \ x\in\gamma.
\]
Therefore,
\[
	A_x\leq C n_x^{m+1} z_r^m z(|x|)^m =M z(|x|)^{2m+1},
\]
where $M:=C z_r^m$.

Fix an arbitrary $p>1$.
By \eqref{eq:est1} and \eqref{eq:balancecond} we obtain the bound
\begin{align*}
\left\Vert A\bar{q}\right\Vert _{\alpha'' }&\leq  M w_r^{\alpha'}\lVert \bar{q} \rVert _{\alpha' }\sup_{x\in \gamma
}\left( z(|x|)^{p(2m+1)}w(|x|)^{-p(\alpha''-\alpha' )}\right)^{\frac{1}{p}}\\
&\leq  M w_r^{\alpha'}\lVert \bar{q} \rVert _{\alpha' }\left( \frac{D\bigl(p\beta,p(2m+1)\bigr)}{p(\alpha''-\alpha' )}\right)^{\frac{1}{p}}=B w_r^{\alpha'} \left( \alpha''-\alpha' \right) ^{-1/p}\lVert \bar{q} \rVert _{\alpha' },
\end{align*}
where $B=M \bigl(\frac{1}{p}D(p\beta,p(2m+1))\bigr)^{\frac{1}{p}}>0$.
 This completes the proof.
\end{proof}

Now we can use Theorem~\ref{Th-ovs} and prove the existence of solutions of equation \eqref{eq:dtPsi}.
\begin{proposition}\label{theLemma}
   Let Condition~\ref{cond:balance} hold and let
 $A=(A_{x,y})_{x,y\in\gamma}$ be given by \eqref{eq:defA}. Then, for any  $0<\alpha <\beta $,  equation \eqref{eq:dtPsi} with $\Psi(0)\in S_{\alpha}^{\gamma}$
has a  classical  solution $\Psi (t)\in S_{\beta }^{\gamma}$, $t\geq 0$.   Moreover, the  estimate
\begin{equation}
\left\Vert \Psi (t)\right\Vert _{\beta }\leq A_p(\alpha ,\beta
;t)\left\Vert \Psi (0)\right\Vert _{\alpha },  \quad t\geq0, \label{psi-est}
\end{equation}
 holds true for each $p>1$, with an entire function  $A_p(\alpha ,\beta;\cdot):\R_+\to\R_+$ of order
$\rho=\frac{p}{p-1}>1$ and
type $\sigma=B^\rho  w_r^{\beta\rho} e^{\rho-1} \rho^{-1}(\beta- \alpha)^{- \frac{1}{p-1}}>0$.
\end{proposition}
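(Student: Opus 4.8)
The plan is to reduce Proposition~\ref{theLemma} to a direct application of Theorem~\ref{Th-ovs}, using Lemma~\ref{lemma-A} to verify the hypothesis (\ref{A-cond}). First I would fix $0<\alpha<\beta$ and consider the scale of Banach spaces $\B_{\alpha'}:=S^\gamma_{\alpha'}$ for $\alpha'\in(0,\beta]$. One checks the scale axiom (\ref{scale}): the inclusion $S^\gamma_{\alpha'}\subset S^\gamma_{\alpha''}$ for $\alpha'<\alpha''$ was already noted after (\ref{defSalphagamma}), and since $w\ge 1$ is non-decreasing we have $w(|x|)^{-\alpha''}\le w(|x|)^{-\alpha'}$, whence $\lVert\bar q\rVert_{\alpha''}\le\lVert\bar q\rVert_{\alpha'}$, as required.

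Next I would invoke Lemma~\ref{lemma-A}: under Condition~\ref{cond:balance}, the matrix $A$ from (\ref{eq:defA}) defines a bounded operator $A:S^\gamma_{\alpha'}\to S^\gamma_{\alpha''}$ for all $0<\alpha'<\alpha''<\beta$, with operator norm bounded by $B\,w_r^{\alpha'}(\alpha''-\alpha')^{-1/p}$ for any fixed $p>1$ and some $B=B(p)>0$. Since $w_r\ge 1$ and $\alpha'<\beta$, we have $w_r^{\alpha'}\le w_r^{\beta}$, so $\lVert Ax\rVert_{\alpha''}\le (B w_r^{\beta})(\alpha''-\alpha')^{-1/p}\lVert x\rVert_{\alpha'}$. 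This is precisely (\ref{A-cond}) with constant $c:=B w_r^{\beta}$ and exponent $q:=1/p\in(0,1)$; note $c$ depends only on $\beta$ and $p$, not on $\alpha',\alpha''$, as Theorem~\ref{Th-ovs} requires. (A small point worth a sentence: Lemma~\ref{lemma-A} gives boundedness for $\alpha''<\beta$ strictly, whereas Theorem~\ref{Th-ovs} is stated for $\alpha''\le\beta$; but the proof of Theorem~\ref{Th-ovs} only uses boundedness strictly below $\beta$ — it works in $\B_{\delta-}$ for $\delta<\beta$ and then passes to the limit $\delta\to\beta$ — so the hypothesis as actually used is met. Alternatively one enlarges $\beta$ slightly.)

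Then I would apply Theorem~\ref{Th-ovs} directly with $u_0:=\Psi(0)\in S^\gamma_\alpha$: it produces a continuous function $\Psi:[0,\infty)\to S^\gamma_\beta$, continuously differentiable on $(0,\infty)$, with $A\Psi(t)\in S^\gamma_\beta$ and $\frac{d}{dt}\Psi(t)=A\Psi(t)$ there — i.e., a classical solution of (\ref{eq:dtPsi}) living in $S^\gamma_\beta$ for all $t\ge 0$. It remains to read off the quantitative bound. Theorem~\ref{Th-ovs} gives $\lVert\Psi(t)\rVert_\beta\le a(t)\lVert\Psi(0)\rVert_\alpha$ with $a$ entire of order $\rho=\frac{1}{1-q}=\frac{1}{1-1/p}=\frac{p}{p-1}$ and type
\[
\sigma=\frac{(ce)^\rho}{e\rho\,(\beta-\alpha)^{q\rho}}
=\frac{(B w_r^{\beta}e)^\rho}{e\rho\,(\beta-\alpha)^{\rho/p}}
=B^\rho w_r^{\beta\rho}\,e^{\rho-1}\rho^{-1}(\beta-\alpha)^{-\frac{1}{p-1}},
\]
using $q\rho=\frac{1}{p}\cdot\frac{p}{p-1}=\frac{1}{p-1}$ and $e^\rho/e=e^{\rho-1}$. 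This is exactly the claimed type, so setting $A_p(\alpha,\beta;t):=a(t)$ finishes the proof.

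There is no real obstacle here: the content is entirely front-loaded into Theorem~\ref{Th-ovs} and Lemma~\ref{lemma-A}. The only things demanding care are (i) checking that the constant in (\ref{A-cond}) can be taken independent of $\alpha',\alpha''$ — handled by the crude bound $w_r^{\alpha'}\le w_r^{\beta}$ — and (ii) the bookkeeping identity $q\rho=\frac{1}{p-1}$ that makes the type come out in the stated form. If anything is "hard," it is purely notational: making sure the open-endpoint issue ($\alpha''<\beta$ versus $\alpha''\le\beta$) is addressed cleanly, which a single remark about the structure of the proof of Theorem~\ref{Th-ovs} resolves.
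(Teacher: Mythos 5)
Your proposal is correct and takes essentially the same route as the paper: verify \eqref{A-cond} for the scale $\B_{\alpha'}=S^\gamma_{\alpha'}$ with $q=1/p$ and an $(\alpha',\alpha'')$-independent constant via Lemma~\ref{lemma-A}, then apply Theorem~\ref{Th-ovs}; your choice $c=Bw_r^{\beta}$ reproduces the stated order and type exactly. The only quibble is the parenthetical claim that the proof of Theorem~\ref{Th-ovs} uses boundedness only strictly below $\beta$ (it also uses that $A$ maps $\B_\delta\to\B_\beta$ at the endpoint), but this is harmless because the proof of Lemma~\ref{lemma-A} applies verbatim with $\alpha''=\beta$ --- the admissibility bound \eqref{eq:balancecond} only requires $\alpha''-\alpha'<\beta$ --- so the endpoint case is covered and your argument stands.
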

\begin{proof}
Take any $p>1$. By Lemma~\ref{lemma-A}, for any $0<\alpha<\alpha'<\alpha''\leq \beta$, the estimate \eqref{A-cond} (for the scale of spaces $\B_\alpha=S_\alpha^\gamma$) holds with $q=\frac{1}{p}\in(0,1)$ and $c=B e^{\beta r+\beta-\alpha}>B e^{\alpha' r+\alpha''-\alpha'}>0$. Then the result follows directly from Theorem~\ref{Th-ovs}.
\end{proof}

\subsection{Finite-dimensional approximations}

For a compact set $\Lambda \subset X$, consider the collection $\bar{q}^\Lambda(t)=\left( q_{x}^\Lambda(t)\right) _{x\in \gamma }$ of functions $q_{x}^\Lambda:\R_+\to S$ such that
\begin{equation}
\begin{cases}
\dt q_{x}^{\Lambda }(t)=F_{x}(  \bar{q}^{\Lambda } (t)), & t>0, \ x\in \gamma_\Lambda,\\[2mm]
 q_{x}^{\Lambda }(t)=q_x(0), & t>0, \ x\in \gamma_{\Lambda^c},\\[2mm]
q_{x}^{\Lambda}(0)=q_x(0), & x\in\gamma,
\end{cases}  \label{ODE-fin}
\end{equation}
where $\gamma_\Lambda=\gamma\cap \Lambda$, $\gamma_{\Lambda^c}=\gamma\cap \Lambda^c$, $\Lambda^c:=X\setminus \Lambda$.

\begin{proposition}
\label{proposition-bound}
Let Conditions~\ref{cond:row-finite}--\ref{cond:balance} hold.
Then, for any compact $\Lambda \subset X$ and  $\alpha>0$, the system \eqref{ODE-fin} with an initial condition $\bar{q}_{0}=(q_x(0))_{x\in\gamma}\in S_{\alpha }^{\gamma }$ has a unique solution $\bar{q}^{\Lambda }(t)$, $t\geq 0$. Moreover, for each $x\in\gamma$, there exists
an entire function $Q_{x}:\R_+\to\R_+$, such that the estimate
\begin{equation}
\left\vert q_{x}^{\Lambda }(t)\right\vert \leq Q_{x}(t), \quad t\geq0,  \label{unif-est}
\end{equation}
holds for all compact sets $\Lambda\subset X $.
\end{proposition}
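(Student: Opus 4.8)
The plan is to read \eqref{ODE-fin} as a genuinely finite-dimensional Cauchy problem, obtain local existence and uniqueness from the classical theory, and then derive the uniform bound \eqref{unif-est} by majorizing the scalar quantities $U_x(q_x^\Lambda(t))$ by the Ovsyannikov solution of the linear equation \eqref{eq:dtPsi}; global existence will follow because this majorant is defined on all of $\R_+$. Since $\gamma$ is locally finite and $\Lambda$ compact, $\gamma_\Lambda=\gamma\cap\Lambda$ is finite, and by the second line of \eqref{ODE-fin} the components $q_x^\Lambda$ with $x\in\gamma_{\Lambda^c}$ are frozen at $q_x(0)$; thus \eqref{ODE-fin} is a system of finitely many first-order ODEs for $(q_x^\Lambda)_{x\in\gamma_\Lambda}$ whose right-hand sides are, by Condition~\ref{cond:row-finite}, of class $C^1$ in these variables (the finitely many frozen neighbours entering as parameters). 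The Picard--Lindel\"of theorem then yields a unique maximal solution $\bar q^\Lambda$ on some interval $[0,T_\Lambda)$, continuous there and continuously differentiable on $(0,T_\Lambda)$.

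To build the majorant, I would put $\L_x^\Lambda(t):=U_x(q_x^\Lambda(t))$, $x\in\gamma$. For $x\in\gamma_\Lambda$ the chain rule together with \eqref{ODE-fin} and Condition~\ref{cond:diss} gives, exactly as in \eqref{eq:L}--\eqref{eq:major},
\[
\dt\L_x^\Lambda(t)=F_x(\bar q^\Lambda(t))\cdot\nabla U_x(q_x^\Lambda(t))\leq\sum_{y\in\gamma}A_{x,y}\L_y^\Lambda(t),\qquad t\in(0,T_\Lambda),
\]
with $A$ from \eqref{eq:defA}, each sum over $y$ being finite since $A_{x,y}=0$ unless $y\sim x$; for $x\in\gamma_{\Lambda^c}$ the left-hand side vanishes while the right-hand side is nonnegative, so the same inequality holds for all $x\in\gamma$. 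From \eqref{c01} and $\bar q_0\in S_\alpha^\gamma$ one reads off that $\L^\Lambda(0)=(U_x(q_x(0)))_{x\in\gamma}\in S_{j\alpha}^\gamma$ with $\lVert\L^\Lambda(0)\rVert_{j\alpha}\leq C_2(\lVert\bar q_0\rVert_\alpha^j+1)$ --- and this vector does not depend on $\Lambda$. Fixing any $\beta'>j\alpha$, Proposition~\ref{theLemma} (with $j\alpha$ in place of $\alpha$) says that equation \eqref{eq:dtPsi} with $\Psi(0)=\L^\Lambda(0)$ has the classical solution $\Psi(t)=\sum_{n\geq0}\frac{t^n}{n!}A^n\Psi(0)\in S_{\beta'}^\gamma$, $t\geq0$ (cf.\ \eqref{u-t}); in particular each $\Psi_x$ is the restriction of an entire function, and, since $A$ and $\Psi(0)$ have nonnegative entries, $\Psi(t)\geq\Psi(0)\geq0$ coordinate-wise for every $t\geq0$.

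The claim to be verified is that $\L^\Lambda(t)\leq\Psi(t)$ coordinate-wise on $[0,T_\Lambda)$. For $x\in\gamma_{\Lambda^c}$ this is immediate, since $\L_x^\Lambda(t)=U_x(q_x(0))=\Psi_x(0)\leq\Psi_x(t)$. Setting $w_x(t):=\Psi_x(t)-\L_x^\Lambda(t)$ and using $\L_y^\Lambda(t)=\Psi_y(0)\leq\Psi_y(t)$ for the frozen neighbours $y\in\gamma_{\Lambda^c}$, one gets for $x\in\gamma_\Lambda$ the \emph{finite} system of differential inequalities $\dt w_x(t)\geq\sum_{y\in\gamma_\Lambda,\,y\sim x}A_{x,y}w_y(t)$ with $w_x(0)=0$; as its coefficient matrix is entrywise nonnegative (hence quasimonotone), the classical comparison theorem for such systems --- provable, if one wishes, by perturbing the trivial subsolution by a small constant positive vector and letting the perturbation tend to $0$ --- yields $w_x(t)\geq0$ on $[0,T_\Lambda)$. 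Hence $U_x(q_x^\Lambda(t))\leq\Psi_x(t)$ for all $x\in\gamma$ and $t\in[0,T_\Lambda)$, and then \eqref{c0} (handling the case $|q_x^\Lambda(t)|<1$ trivially) gives $|q_x^\Lambda(t)|\leq Q_x(t):=1+C_1^{-1}\Psi_x(t)$, which is an entire, strictly positive function of $t$ not depending on $\Lambda$. Finally, were $T_\Lambda<\infty$, the finitely many components $(q_x^\Lambda)_{x\in\gamma_\Lambda}$ would remain in the bounded set $\{(q_x)_{x\in\gamma_\Lambda}:|q_x|\leq\max_{[0,T_\Lambda]}Q_x\}$ on $[0,T_\Lambda)$, contradicting maximality of the lifetime; hence $T_\Lambda=\infty$ and \eqref{unif-est} holds on $[0,\infty)$.

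The one genuinely delicate point is the comparison step. The majorant equation lives in the scale $(S_\beta^\gamma)_\beta$, where there is no semigroup $e^{tA}$ acting in a fixed Banach space, so an infinite-dimensional comparison principle cannot be invoked directly. The way around it is precisely the observation used above: every component indexed by $\gamma_{\Lambda^c}$ is frozen and is already dominated by $\Psi$, and $\Psi$ is nondecreasing in $t$, so the comparison reduces to the finite index set $\gamma_\Lambda$, where the classical theory of quasimonotone differential inequalities applies. Checking that the frozen boundary data and the initial majorant genuinely do not depend on $\Lambda$ --- so that a single entire $Q_x$ works for every $\Lambda$ --- is what makes the estimate uniform, and is exactly what will be needed for the limit $\Lambda\to X$ in the proof of Theorem~\ref{theor-main}.
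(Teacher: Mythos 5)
Your argument is correct and shares the paper's skeleton (finite-dimensional well-posedness of \eqref{ODE-fin}, the quantities $\L_x^\Lambda=U_x(q_x^\Lambda)$, a linear Ovsyannikov majorant, exclusion of finite-time blow-up), but the majorization step is organized genuinely differently. The paper never compares $\L^\Lambda$ with the solution $\Psi$ of the full equation \eqref{eq:dtPsi}: it introduces the cut-off matrix $A^\Lambda$ in \eqref{eq:acutoff}, compares the essentially finite system of inequalities \eqref{L-bound2} with the essentially finite linear system \eqref{eq:newsyst} by Walter's comparison theorem, and then gets uniformity in $\Lambda$ from the observation that $A^\Lambda$ obeys the bound \eqref{eq:est21} with the same constant $B$ as $A$, so the estimate \eqref{psi-est-La} holds with one and the same entire function $A_p$; the final majorant is $Q_x(t)=\frac{1}{C_1}A_p(j\alpha,j\beta;t)\lVert\L(0)\rVert_{j\alpha}w(|x|)^{j\beta}+1$, built from $A_p$ rather than from a single fixed solution. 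You instead dominate by the $\Lambda$-independent solution $\Psi(t)=\sum_{n\geq0}\frac{t^n}{n!}A^n\L(0)$ itself, and you make this comparison legitimate by two observations the paper does not need: the coordinate-wise monotonicity $\Psi(t)\geq\Psi(0)\geq0$ (nonnegativity of $A$ and of $\L(0)$), which settles the frozen coordinates in $\gamma_{\Lambda^c}$, and the resulting reduction to a finite quasimonotone system indexed by $\gamma_\Lambda$, where the classical comparison principle applies. Your route buys automatic uniformity in $\Lambda$ (one majorant for all cut-offs) and uses Proposition~\ref{theLemma} exactly as stated, instead of tacitly re-running its proof with $A^\Lambda$ in place of $A$; the paper's route keeps the comparison entirely within finite-dimensional systems and needs no positivity or monotonicity of the Ovsyannikov series. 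One small point in your version: entirety of $Q_x(t)=1+C_1^{-1}\Psi_x(t)$ does require the componentwise bound $\lvert(A^n\L(0))_x\rvert\leq\lVert A^n\L(0)\rVert_{\beta'}\,w(|x|)^{\beta'}$ together with the factorial estimates from Theorem~\ref{Th-ovs} (being dominated by an entire function is not enough by itself); alternatively you could simply adopt the paper's explicit $Q_x$ above and sidestep the issue.
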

\begin{proof} For an arbitrary compact $\Lambda\subset X$, the system \eqref{ODE-fin} is essentially finite-dimensional with continuously differentiable coefficients. Thus there exists a unique solution of \eqref{ODE-fin} with lifetime $T_\Lambda\le\infty$. Moreover, $T_\Lambda<\infty$ implies that $|q_x^\Lambda(t)|\to\infty$ as $t\nearrow T_\Lambda$ for some $x\in\gamma_\Lambda$.

Similarly to \eqref{eq:L}, define
\begin{equation}\label{defofL}
	\L_x^\Lambda(t):=U_{x}(q_{x}^{\Lambda }(t)), \quad x\in\gamma, \ t\in[0,T_\Lambda),
\end{equation}
and note that
$\L_x^\Lambda(0)=\mathcal{L}_x(0)=U_{x}(q_{x}(0))$.
Observe that $\dt \L_x^\Lambda(t)=0$ for $x\in\gamma_{\Lambda_c}$, $t>0$. Thus, similarly to \eqref{eq:major}, we have the inequality
\begin{equation}
\dt\L_{x}^{\Lambda }(t)\leq \sum_{y\in \gamma }a_{x,y}^{  \Lambda }\mathcal{L}
_{y}^{\Lambda }(t),\quad x\in \gamma, \ t\in[0,T_\Lambda),  \label{L-bound2}
\end{equation}
where
\begin{equation}\label{eq:acutoff}
a^\Lambda_{x,y}:=\begin{cases}
				A_{x,y}, & x\in \Lambda, \\
				0,& \text{otherwise}
		\end{cases}
\end{equation}
and $A_{x,y}$ is given by \eqref{eq:defA}.
Denote
\[
\Lambda _{r} :=\bigl\{ x\in X\bigm\vert \mathrm{dist}\,\left( x,\Lambda \right) \leq r\bigr\}.
\]
We can replace $\gamma$ with $\gamma_{\Lambda_r}=\gamma\cap \Lambda_r$ in the r.h.s. of \eqref{eq:defA}, because $a^\Lambda_{x,y}=0$ if $y\not\in\Lambda _{r}$. Also, the relation $\frac{d}{dt}\L_{x}^{\Lambda }(t)=0\leq \mathcal{L}_{x}^{\Lambda }(t)$, $x\in\gamma\cap (\Lambda_r\setminus\Lambda)$, implies that \eqref{eq:defA}  always holds for $x\not\in\Lambda _{r}$. Thus the system of differential inequalities \eqref{L-bound2} is also essentially finite.

The classical comparison theorem (see e.g. \cite{Wal}) implies that
\begin{equation}\label{eq:addedlabel}
0\leq \mathcal{L}_{x}^{\Lambda }(t)\leq \Psi _{x}^{ \Lambda_r }(t), \quad   x\in\gamma_{\Lambda_r} ,
\end{equation}
where the collection of functions $(\Psi _{x}^{ \Lambda_r }(t))_{x\in \gamma_{\Lambda_r}}$ satisfies the following system of equations
\begin{equation*}
\dt{\Psi}_{x}^{ \Lambda_r }(t)=\sum_{y  \in\gamma_{\Lambda_r}  }a_{x,y}^{  \Lambda }\Psi
_{y}^{ \Lambda_r }(t),\quad \Psi _{x}^{ \Lambda_r }(0)=\mathcal{L}_{x}^{\Lambda }(0), \quad  x\in \gamma_{\Lambda_r}.
\end{equation*}
The latter system can be considered as a single equation with the cut-off matrix $A^\Lambda=(a_{x,y}^\Lambda)_{x,y\in\gamma}$:
\begin{equation}\label{eq:newsyst}
\dot{\Psi}^{ \Lambda_r }(t)=A^{  \Lambda  }\Psi^{ \Lambda_r } (t),\qquad \Psi_{x}^{ \Lambda_r }(0)=\mathcal{L}
_{x}^{\Lambda }(0), \  x\in\gamma,
\end{equation}
with the `trivial' extension $\Psi_{x}^{\Lambda_r }(t)
=\mathcal{L}_{x}^{\Lambda }(t)$,   $t\geq0$, for all   $x\in X\setminus \Lambda _{r}$.

 By \eqref {eq:L} and \eqref{c01} we have
 \begin{equation}\label{Lest1}
 \vert\L_{x}(0)\vert\leq C_2(|q_x(0)|^j+1).
 \end{equation}
  Therefore $\bar{q}_0\in S_\alpha^\gamma$ yields $\bigl(\mathcal{L}_{x}^{\Lambda }(0)\bigr)_{x\in \gamma}=\bigl(\mathcal{L}_{x}(0)\bigr)_{x\in \gamma}=:\mathcal{L}(0)\in S_{j\alpha}^\gamma$ and
  \begin{equation}
  \left\Vert \mathcal{L} (0)\right\Vert _{j\alpha }\leq  C_3\left( \Vert \bar{q_0}\Vert^j_\alpha +1\right)\label{L-est111}
  \end{equation}
  for some constant $C_3$.
   It is evident that  estimate \eqref{eq:est21} holds for the operator $A^\Lambda$ instead of $A$, with the same constant $B>0$. Therefore, using the arguments similar to those in the proof of Proposition~\ref{theLemma}, we can show that  the solution $\Psi ^{\Lambda_r }(t)$ to the system \eqref{eq:newsyst} exists for all $t\geq 0$ and satisfies the estimate
\begin{equation}
\left\Vert \Psi^{\Lambda_r } (t)\right\Vert _{j\beta }\leq A_p(j\alpha ,j\beta
;t)\left\Vert \mathcal{L} (0)\right\Vert _{j\alpha }, \quad\beta >\alpha, \quad t\geq0, \label{psi-est-La}
\end{equation}
with the same entire function $A_p$ as in \eqref{psi-est}.

Next, using \eqref{c0}, \eqref{defofL}, \eqref{eq:addedlabel} and \eqref{psi-est-La}, we can write,
for each $ t\geq0$,
\begin{align}
\left\vert q_{x}^{\Lambda }(t)\right\vert &\leq   \frac{1}{C_{1}}
U_{x}(q_{x}^{\Lambda }(t)) +1 =\frac{1}{C_{1}}\L
_{x}^{\Lambda }(t)+1 \leq \frac{1}{C_1}\Psi ^{\Lambda_r }_x(t)+1 \notag\\& \leq \frac{1}{C_1}A_p(j\alpha ,j\beta
;t)\left\Vert \mathcal{L} (0)\right\Vert _{j\alpha }
w(x)^{j\beta}+1=:Q_{x}(t)<\infty.\label{q-est111}
\end{align}
This bound holds for any $x\in\gamma$, which implies that $T_\Lambda=\infty$ and \eqref{unif-est} holds. The proof is complete.
\end{proof}

\begin{corollary}
Estimates (\ref{q-est111}) and (\ref{L-est111}) imply that
\begin{align}
\left\Vert \bar{q}^{\Lambda }(t)\right\Vert_{\beta}
 \leq
  C_p(\alpha ,\beta;t)\bigl(\left\Vert \bar{q}_0 \right\Vert^j _{\alpha } +1\bigr), \quad t\geq0,\label{q-est1111}
\end{align}
for any
$\beta>j\alpha$.
Here $C_p(\alpha ,\beta;\cdot):\R_+\to\R_+$  is an entire function of order
$\rho=\frac{p}{p-1}>1$ and type $\sigma=B^\rho e^{\rho(\beta r+\beta-j\alpha+1}) (e\rho)^{-1}(\beta- j\alpha)^{- \frac{1}{p-1}}>0$
with some $B=B(p)>0$.
\end{corollary}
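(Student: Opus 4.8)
The plan is to obtain \eqref{q-est1111} as a direct consequence of the pointwise bound \eqref{q-est111} and the norm bound \eqref{L-est111}, the only manoeuvre being a rescaling of the scale parameter that converts the weight exponent appearing in \eqref{q-est111} into the target exponent $\beta$ of $\lVert\cdot\rVert_{\beta}$. Fix $\beta>j\alpha$ and $p>1$. First I would set $\widetilde\beta:=\beta/j$, so that $\widetilde\beta>\alpha$ and $j\widetilde\beta=\beta$, and apply \eqref{q-est111} with $\widetilde\beta$ in the role of the free parameter $\beta$ occurring there (legitimate since $\widetilde\beta>\alpha$); recalling that \eqref{q-est111} already holds uniformly in the compact set $\Lambda$, this gives
\[
\lvert q_x^{\Lambda}(t)\rvert\le \frac{1}{C_1}\,A_p(j\alpha,\beta;t)\,\lVert\mathcal L(0)\rVert_{j\alpha}\,w(|x|)^{\beta}+1 ,\qquad x\in\gamma,\ t\ge0,
\]
with the entire function $A_p$ from Proposition~\ref{theLemma} (and with $C_1$, $A_p$ independent of $\Lambda$).

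Next I would divide by $w(|x|)^{\beta}$ and take the supremum over $x\in\gamma$: the factor $w(|x|)^{\beta}$ in the numerator cancels exactly, while $w(|x|)^{-\beta}\le1$ since $w\ge1$, so
\[
\lVert\bar q^{\Lambda}(t)\rVert_{\beta}=\sup_{x\in\gamma}\frac{\lvert q_x^{\Lambda}(t)\rvert}{w(|x|)^{\beta}}\le \frac{1}{C_1}\,A_p(j\alpha,\beta;t)\,\lVert\mathcal L(0)\rVert_{j\alpha}+1 .
\]
I would then insert \eqref{L-est111}, namely $\lVert\mathcal L(0)\rVert_{j\alpha}\le C_3\bigl(\lVert\bar q_0\rVert_\alpha^{\,j}+1\bigr)$, and absorb the additive constant using $\lVert\bar q_0\rVert_\alpha^{\,j}+1\ge1$, arriving at \eqref{q-est1111} with $C_p(\alpha,\beta;t):=\tfrac{C_3}{C_1}A_p(j\alpha,\beta;t)+1$, which is uniform in $\Lambda$.

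Finally I would read off the order and type of $C_p$. Since adding a constant and multiplying by a positive constant change neither, these coincide with the order and type of $A_p(j\alpha,\beta;\cdot)$, which in turn come from Proposition~\ref{theLemma}, equivalently from Theorem~\ref{Th-ovs}, upon substituting $j\alpha$ and $\beta$ for the scale endpoints: with $q=\tfrac1p$ (so that $q\rho=\tfrac1{p-1}$) and $c=Be^{\beta r+\beta-j\alpha}$ one gets order $\rho=\tfrac{p}{p-1}>1$ and type $(ce)^{\rho}(e\rho)^{-1}(\beta-j\alpha)^{-q\rho}=B^{\rho}e^{\rho(\beta r+\beta-j\alpha+1)}(e\rho)^{-1}(\beta-j\alpha)^{-\frac1{p-1}}$, as stated. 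I do not anticipate any genuine obstacle here: the content of the corollary is already packed into the estimates of Proposition~\ref{proposition-bound}, and the single point requiring care is the bookkeeping of constants through the substitution $\beta\mapsto\widetilde\beta=\beta/j$ --- in particular verifying that this is precisely the choice for which $w(|x|)^{\,j\widetilde\beta}$ in \eqref{q-est111} cancels the weight $w(|x|)^{\beta}$ defining $\lVert\cdot\rVert_{\beta}$.
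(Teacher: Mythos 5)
Your proposal is correct and follows essentially the same route the paper intends: the corollary is stated without a separate proof precisely because it is the substitution $\widetilde\beta=\beta/j>\alpha$ in \eqref{q-est111} (so that $w(|x|)^{j\widetilde\beta}=w(|x|)^{\beta}$), followed by dividing by the weight, taking the supremum, inserting \eqref{L-est111}, and absorbing constants. Your bookkeeping of the order and type, going back to Theorem~\ref{Th-ovs} with $q=\tfrac1p$ and $c=Be^{\beta r+\beta-j\alpha}$ as in the proof of Proposition~\ref{theLemma}, reproduces exactly the constants stated in the corollary.
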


Now we are ready to prove Theorem~\ref{theor-main}. The proof is essentially
similar to that in \cite{LLL}.

\begin{proof}[Proof of Theorem~\ref{theor-main}]

Fix
$\alpha>0$, $\beta>j \alpha$
and  $\bar{q}_{0}\in S_{\alpha }^{\gamma }$.  Choose any sequence ${\bf\Lambda}^0$ of compacts $\Lambda$ exhausting $X$.
Let $\bar{q}^{\Lambda}(t)$ solve the corresponding system \eqref{ODE-fin}. Observe that both sides of inequality \eqref{unif-est} are continuous in $t$, which implies that
\[
	\sup_{t\in[0,T]}\left\vert q_{x}^{\Lambda }(t)\right\vert\leq \sup_{t\in[0,T]}Q_{x}(t)<\infty,
\]
for each $x\in\gamma$ and any $T>0$. The equality $\dot{q}_{x}^{\Lambda }=F_{x}( \bar{q}^{\Lambda })$ together with continuity of $F_{x}$ on $S^{\gamma_{x,r}}$ imply then that $\sup_{t\in[0,T]}\left\vert \dot{q}_{x}^{\Lambda }(t)\right\vert$, $x\in\gamma$, is also bounded uniformly in $\Lambda$.

Let us fix an arbitrary indexation of $\gamma$, so that $\gamma=(x_k)_{k=1}^\infty$.
The Arzel\`a--Ascoli theorem implies that there exists a subsequence ${\bf\Lambda}^{(1)}=(\Lambda^{(1)}_{n})$ of ${\bf\Lambda}^{(0)}$ and some $q_{x_1}(t)\in S$ such that  $q_{x_1}^{\Lambda^{(1)} _{n}}(t)\rightrightarrows q_{x_1}(t)$ as $n\to\infty$, where $\rightrightarrows$ denotes the uniform convergence in $t\in[0,T]$.  Repeating these arguments, we can see that for any $k\in \mathbb{N}$ there exists a subsequence ${\bf\Lambda}^{(k)}=(\Lambda_{n}^{(k)})$ of ${\bf\Lambda}^{(k-1)}$ and $q_{x_k}(t)\in S$ such that  $q_{x_k}^{\Lambda _{n}^{(k)}}(t)\rightrightarrows q_{x_k}(t)$, $n\to\infty$. Then of course
$q_{x_m}^{\Lambda _{n}^{(k)}}(t)\rightrightarrows q_{x_m}(t)$, $n\to\infty$, for all $m\le k$.

Consider now the ``diagonal'' sequence ${\bf\Lambda}$ with elements $\Lambda_n:=\Lambda_{n}^{(n)}$. It is clear that $q_{x_k}^{\Lambda _{n}}(t)\rightrightarrows q_{x_k}(t)$ $n\to\infty$, for all $k\in \mathbb{N}$.

The limit transition in both sides of the equality
\begin{equation*}
q_{x}^{\Lambda _{n}}(t)=q_{x,0}^{\Lambda _{n}}+\int_{0}^{t}F_{x}\left(
q_{x}^{\Lambda _{n}}(s)\right) ds,\ x\in \gamma ,
\end{equation*}
shows that the functions $q_{x}(t)=\lim q_{x}^{\Lambda _{n}}(t),\ x\in \gamma $, solve
the system (\ref{ODE}). The inclusion
$\bar{q}(t)\in \bigcap_{\beta>j\alpha} S_{\beta }^\gamma$ and bounds (\ref{q-bound}) for all $\beta>j\alpha$
 follow from (\ref{q-est1111}). The proof is complete.
\end{proof}

\section{The uniqueness}
In this section, we will discuss conditions that guarantee the uniqueness of  the solution  ${\bar q}(t)\in S_{\beta }^{\gamma }$. We fix $\beta>0 $ and   let $\Delta _{\beta ,R}$ be the ball of  radius $
R>0$ in $S_{\beta }^{\gamma }$ centered at $0=(0)_{x\in\gamma}\in S_\beta^\gamma$.

For $x,y\in \gamma$ denote by $\frac{\partial F_{x}(\bar{q})}{\partial q_{y}}$  the Jacobian matrix of the mapping $F_x$ w.r.t. the variable $q_y$. By Condition~\ref{cond:row-finite}, this Jacobian is the zero matrix if $y\notin \gamma_{x,r}$.
We also define the corresponding gradient as the following vector with matrix components:
\begin{equation*}
\nabla F_{x}(\bar{q})=\left( \frac{\partial F_{x}(\bar{q})}{\partial q_{y}}
\right) _{y\in \gamma}, \quad x\in \gamma.
\end{equation*}
 Since all but finite number of the components of the gradient vector are zero matrices, we can define its norm
\[
	\bigl\bracevert\!\! \nabla F_{x}(\bar{q})\!\!\bigr\bracevert:=\sum_{y\in \gamma
}\left\Vert \frac{\partial F_{x}(\bar{q})}{\partial q_{y}}\right\Vert<\infty, \quad x\in \gamma,
\]
where $\lVert \cdot\rVert$ denotes the standard norm of a  linear operator in $S$.

To ensure the uniqueness, we assume the following:
\begin{condition}\label{cond:uniq}
For any $R>0$, there exists a constant $C_R>0$ such that
$$
\sup_{\bar{q}\in \Delta _{\beta ,R}}\bigl\bracevert\!\! \nabla F_{x}(\bar{q})\!\!\bigr\bracevert
  \leq
C_R\left( \lvert x\rvert +1\right), \quad x\in\gamma.  \label{eq:cond:uniq}
$$
\end{condition}

\begin{theorem}\label{theor-uniq}
Let $\beta>0$, and assume that Condition~\ref{cond:uniq} holds.
Suppose that the weight sequence $w(s)$ in  \eqref{defSalphagamma} satisfies the bound $w(s)\leq e^{\nu s}$, $s\in\R_+$ for some $\nu>0$.
Let
$\bar{q}^{(1)}(t),\bar{q}^{(2)}(t)\in S_\beta^\gamma$ be two pointwise solutions of \eqref{ODE} on $\left[0,T\right]$, and let $\bar{q}^{(1)}(0)=\bar{q}^{(2)}(0)$. Then  $\bar{q}^{(1)}(t)=\bar{q}^{(2)}(t)$, $t\in[0,T]$.
\end{theorem}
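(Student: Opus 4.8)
The plan is to bound the difference $\bar{r}(t):=\bar{q}^{(1)}(t)-\bar{q}^{(2)}(t)$ in an appropriate scale norm and run a Gr\"onwall-type argument, but with the twist that the dissipativity/row-finiteness causes a loss of regularity in the scale index, so the argument must be carried out in a \emph{continuum} of spaces $S_{\beta'}^\gamma$, $\beta'>\beta$, rather than in a single fixed space. First I would fix a radius $R>0$ large enough that both solutions stay in the ball $\Delta_{\beta,R}$ for $t\in[0,T]$ (possible since $[0,T]\ni t\mapsto \lVert\bar{q}^{(i)}(t)\rVert_\beta$ is continuous, hence bounded), and write, for each $x\in\gamma$,
\begin{equation*}
q^{(1)}_x(t)-q^{(2)}_x(t)=\int_0^t\bigl(f_x(\bar{q}^{(1)}_{x,r}(s))-f_x(\bar{q}^{(2)}_{x,r}(s))\bigr)\,ds.
\end{equation*}
By the mean value inequality applied along the segment joining $\bar{q}^{(1)}_{x,r}(s)$ and $\bar{q}^{(2)}_{x,r}(s)$ (both lying in $\Delta_{\beta,R}$, which is convex), the integrand is bounded in norm by $\bigl\bracevert\!\!\nabla F_x\!\!\bigr\bracevert$ evaluated on the segment times $\sum_{y\sim x}\lvert r_y(s)\rvert$, and Condition~\ref{cond:uniq} controls the former by $C_R(\lvert x\rvert+1)$.

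The key technical point is how to absorb the factor $(\lvert x\rvert+1)$ together with the sum over the at most $n_x$ neighbours $y\sim x$. The plan is to introduce, for a parameter $\beta'>\beta$, the quantity $g_{\beta'}(t):=\lVert\bar{r}(t)\rVert_{\beta'}=\sup_{x\in\gamma}\lvert r_x(t)\rvert w(\lvert x\rvert)^{-\beta'}$, and to estimate, using $\lvert y\rvert\le\lvert x\rvert+r$ hence $w(\lvert x\rvert)\le w_r\,w(\lvert y\rvert)$ from \eqref{eq:condw}, together with the crude bound $n_x\le\lvert x\rvert+C$ (which follows from local finiteness of $\gamma$, or more simply from $w(s)\le e^{\nu s}$ and an appropriate $z\in\mathcal R$; actually one only needs a polynomial bound on $n_x$):
\begin{equation*}
\lvert r_x(t)\rvert w(\lvert x\rvert)^{-\beta'}\le C_R\int_0^t (\lvert x\rvert+1)\,n_x\, w_r^{\beta}\,w(\lvert x\rvert)^{-(\beta'-\beta)}\Bigl(\sup_{y\sim x}\lvert r_y(s)\rvert w(\lvert y\rvert)^{-\beta}\Bigr)ds.
\end{equation*}
The crucial estimate is that, since $w(s)\le e^{\nu s}$, for any $\theta>0$ the function $s\mapsto (s+1)^{N} w(s)^{-\theta}$ is bounded on $\R_+$ by something like $K/\theta^{N}$ — this is exactly the role played by the admissibility-type inequality \eqref{eq:balancecond}, and it is why the hypothesis $w(s)\le e^{\nu s}$ is imposed. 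Choosing $\beta'-\beta=\theta$ small, we get $g_{\beta'}(t)\le C_R K(\theta)\int_0^t g_\beta(s)\,ds$; but the right-hand side involves $g_\beta$, a \emph{stronger} norm, so this does not close directly.

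To close the loop I would iterate this inequality through a decreasing sequence of indices. Fix $\beta_0:=\beta$ and a sequence $\beta_k\downarrow\beta$... rather, the cleaner device (the one used in the standard Ovsyannikov uniqueness argument, cf.\ \cite[Theorem~2.4]{Fink} which the authors cite) is: for $0\le s\le t\le T$ and $\beta<\beta''<\beta'$, one has $g_{\beta'}(t)\le \dfrac{a}{\beta'-\beta''}\displaystyle\int_0^t g_{\beta''}(s)\,ds$ with a constant $a=a(R,T)$ independent of $\beta',\beta''$. Iterating $n$ times with the partition $\beta+k(\beta^*-\beta)/n$, $k=0,\dots,n$, between a fixed target level $\beta^*>\beta$ and $\beta$, and using $\sup_{[0,T]}g_\beta<\infty$, gives
\begin{equation*}
g_{\beta^*}(t)\le \Bigl(\frac{an}{\beta^*-\beta}\Bigr)^{n}\frac{t^n}{n!}\,\sup_{[0,T]}g_\beta\le \frac{(aeT)^n}{(\beta^*-\beta)^n}\cdot\frac{n^n}{n!\,e^{-n}}\cdot\frac{1}{n^n}\sup_{[0,T]}g_\beta,
\end{equation*}
and since $n^n/n!\le e^n$ the bound is $\le \bigl(ae^2T/(\beta^*-\beta)\bigr)^n/1\cdot\sup g_\beta$ — hmm, this does not go to zero for fixed $t$. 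The genuine fix is to use the $t^n/n!$ factor: on a short time interval $[0,T_0]$ with $aeT_0<\beta^*-\beta$ one gets $g_{\beta^*}(t)\to 0$ as $n\to\infty$, hence $\bar{r}\equiv 0$ on $[0,T_0]$ in $S_{\beta^*}^\gamma$ and therefore pointwise; then one repeats on $[T_0,2T_0]$, etc., covering $[0,T]$ in finitely many steps. The main obstacle is precisely this bookkeeping — making the constant $a$ uniform over the shrinking scale gaps and over the time-steps, and verifying that the polynomial-in-$\lvert x\rvert$ losses are always beaten by the exponential weight $w(s)\le e^{\nu s}$ via a clean lemma of the form ``$(s+1)^N e^{-\theta s}\le (N/(e\theta))^N$'' — but this is routine given the hypotheses, and structurally identical to the Ovsyannikov uniqueness proof referenced in the text.
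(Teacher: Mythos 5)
Your scale-iteration scheme does not close under the stated hypotheses, and the failure is exactly at the step you flag as crucial. To turn the factor $C_R(|x|+1)$ (and your extra factor $n_x$) into a per-step constant of size $a/(\beta'-\beta'')$ you need $\sup_{s\ge0}(s+1)\,w(s)^{-\theta}\lesssim 1/\theta$, i.e.\ an exponential \emph{lower} bound $w(s)\ge e^{cs}$. But Theorem~\ref{theor-uniq} assumes only the \emph{upper} bound $w(s)\le e^{\nu s}$; the weight may be bounded, or $w(s)=1+s$ as in Example~\ref{ex:medrev}, which is precisely a case where the uniqueness theorem is meant to be applied. For such $w$ the quantity $(s+1)w(s)^{-\theta}$ is unbounded for small $\theta$, so your inequality $g_{\beta'}(t)\le\frac{a}{\beta'-\beta''}\int_0^t g_{\beta''}(s)\,ds$ is not available, and when $w$ is bounded the scale collapses to a single space and there is no weight gain at all to absorb $(|x|+1)$. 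Invoking \eqref{eq:balancecond} does not help: admissibility controls powers of $z$, not powers of $(1+s)$, and neither Condition~\ref{cond:balance} nor any bound on $n_x$ is a hypothesis of Theorem~\ref{theor-uniq} (local finiteness of $\gamma$ gives no bound of the form $n_x\le|x|+C$). The extra $n_x$ you introduce by estimating the mean-value term by $\bigl\bracevert\!\! \nabla F_{x}\!\!\bigr\bracevert\sum_{y\sim x}|r_y|$ instead of $\bigl\bracevert\!\! \nabla F_{x}\!\!\bigr\bracevert\sup_{y\sim x}|r_y|$ is moreover fatal on its own: even a polynomial bound $n_x\lesssim(1+|x|)^N$ with $N\ge1$ produces a per-step loss $(\beta'-\beta'')^{-(N+1)}$, and the $n$-fold iterate $\bigl(an^{N+1}(\beta^*-\beta)^{-(N+1)}\bigr)^n t^n/n!$ then diverges for every $t>0$, so not even short-time uniqueness would follow.

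The paper's proof iterates in space rather than in the scale index. Setting $\delta_n(t)=\sup_{|x|\le nr}\bigl|q_x^{(1)}(t)-q_x^{(2)}(t)\bigr|$ and using the sup-form of the mean-value bound together with Condition~\ref{cond:uniq}, one gets $\delta_n(t)\le\mu nr\int_0^t\delta_{n+1}(s)\,ds$; after $N$ iterations this yields the factor $\frac{(t\mu r)^N}{N!}\,n(n+1)\cdots(n+N-1)$, and the chain is terminated not by a contraction in the scale but by the a priori bound $\delta_{n+N}(s)\le 2R\,w((n+N)r)^\beta\le 2Re^{\beta\nu(n+N)r}$. This is the actual (and only) role of the hypothesis $w(s)\le e^{\nu s}$: it limits the growth in $x$ of the solutions themselves, so that the exponential tail $e^{\beta\nu Nr}$ is beaten by $t^N/N!$ times the binomial factor for $t<t_0=(2\mu re^{\beta\nu r+1})^{-1}$, after which one steps forward in time, as you also propose. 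Your time-stepping and short-time contraction idea is structurally sound, but to repair the argument you must drop the spurious $n_x$ and replace the absorption of $(|x|+1)$ into the weight increment by this finite-propagation-in-$x$ bookkeeping (or else add an exponential lower bound on $w$ as an extra hypothesis, which would prove a strictly weaker statement than the one claimed).
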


\begin{proof} We start by observing that, because of Condition~\ref{cond:row-finite}, we have  the bound
\begin{multline}\label{eq:look}
	\bigl\lvert F_x(\bar{q}^{(1)})-F_x(\bar{q}^{(2)})\bigr\rvert \\ \leq
	\sum_{y\in\gamma_{x,r}}\bigl\lvert q_{y}^{(1)}-q_{y}^{(2)}\bigr\rvert\sup_{\tau\in[0,1]}\biggl\lVert \frac{\partial F_{x}}{\partial q_{y}}\bigl(\tau\bar{q}^{(1)}+(1-\tau)\bar{q}^{(2)}\bigr) \biggr\rVert,
\end{multline}
which holds for any $\bar{q}^{(l)}=(q_x^{(l)})_{x\in \gamma}$, $l=1,2$.

Assume now that $\bar{q}^{(1)}, \bar{q}^{(2)}\in \Delta_{\beta,R}$, $R>0$. Then $\tau\bar{q}^{(1)}+(1-\tau)\bar{q}^{(2)}\in \Delta_{\beta,R}$, $\tau\in[0,1]$.
Let $n\geq 1$ and $x\in\gamma$ be such that $|x|\leq nr$. Then $|y|\leq (n+1)r$ for any $y\in\gamma_{x,r}$,  cf.~\eqref{eq:est1}. Thus  \eqref{eq:look} implies the following estimate:
\begin{equation}\label{finallyIgot}
\bigl\lvert F_x(\bar{q}^{(1)})-F_x(\bar{q}^{(2)})\bigr\rvert\leq \sup_{|y|\leq (n+1)r}\bigl\lvert q_y^{(1)}-q_y^{(2)}\bigr\rvert \  \sup_{\bar{q}\in \Delta _{\beta ,R}}\bigl\bracevert\!\! \nabla F_{x}(\bar{q})\!\!\bigr\bracevert.
\end{equation}
Let us now $\bar{q}^{(1)}(t),\bar{q}^{(1)}(t)\in S_\beta^\gamma$
be two  solutions of \eqref{ODE} on $t\in [0,T]$.
Set $R:=\max\limits_{t\in \left[ 0,T\right],\,l=1,2}\left\Vert \bar{q}^{(l)}(t)\right\Vert_\beta$, so that $\bar{q}^{(l)}(t)\in \Delta _{\beta,R}$, and let $C_R>0$ be such that Condition~\ref{cond:uniq} holds. Denote
\begin{equation}\label{eq:deltan}
\delta _{n}(t):=\sup_{\lvert x\rvert \leq
nr}  \bigl\lvert q_{x}^{(1)}(t)-q_{x}^{(2)}(t)\bigr\rvert, \quad n\geq 1, \
t\in[0,T].
\end{equation}
For any $x\in \gamma$ such that $\lvert x\rvert \leq nr$, it follows from the integral form of \eqref{ODE}, inequality \eqref{finallyIgot} and Condition~\ref{cond:uniq} that
\begin{align*}
\left\vert q_{x}^{(1)}(t)-q_{x}^{(2)}(t)\right\vert &\leq
\int_{0}^{t}\left\vert F_{x}(\bar{q}^{(1)}(s))-F_{x}(\bar{q}
^{(2)}(s))\right\vert ds \\
&\leq \int_{0}^{t}\delta _{n+1}(s) \sup_{\bar{q}\in \Delta _{\beta ,R}}\bigl\bracevert\!\! \nabla F_{x}(\bar{q})\!\!\bigr\bracevert ds \\
&\leq C_R\left( \lvert x\rvert +1\right) \int_{0}^{t}\delta
_{n+1}(s)ds.
\end{align*}
Therefore,
\begin{equation*}
\delta _{n}(t)\leq C_R\left( nr+1\right) \int_{0}^{t}\delta _{n+1}(s)ds\leq
\mu nr\int_{0}^{t}\delta _{n+1}(s)ds,
\end{equation*}
where $\mu:= C_R\bigl(1+\frac{1}{r}\bigr)>0$. The $N$-th iteration of this estimate gives
\begin{equation}\label{delta-recc}
\delta _{n}(t)\leq \frac{\left( t\mu r\right) ^{N}}{N!}n(n+1)...(n+N-1)\sup_{s\leq t}\delta _{n+N}(s).
\end{equation}
 A direct computation using \eqref{defSalphagamma}, \eqref{eq:deltan} and the inclusion $\bar{q}^{(1)}(t),\bar{q}^{(2)}(t)\in \Delta _{\beta ,R} $ shows that $\delta _{n+N}(s)\leq 2R w((n+N)r)^\beta$, $s\geq 0$. Using the assumption $w(s)\leq e^{\nu s}$, $s\in\R_+$, we get that
 \eqref{delta-recc} implies the bound
 \begin{align*}
	\delta _{n}(t) &\leq 2Re^{\beta\nu \left( n+N\right) r}(t\mu r)^N\binom{n+N-1}{N}\\
	&\leq 2Re^{\beta\nu n r}\biggl(t\mu re^{\beta\nu r+1}\frac{n+N-1}{N}\biggr)^N,
\end{align*}
where we used the well-known inequality $\binom{M}{N}\leq\bigl(\frac{M\,e}{N}\bigr)^N$, $1\leq N\leq M$. Therefore, for all $n\geq 1$ and $N> n-1$ we have
\begin{equation}\label{delta-ineq}
	\delta _{n}(t) < 2Re^{\beta\nu n r}\bigl(2t\mu re^{\beta\nu r+1}\bigr)^N, \quad t\geq0.
\end{equation}
Observe that for $t<t_0:=\bigl(2\mu re^{\beta\nu r+1}\bigr)^{-1}$ the r.h.s. of  inequality \eqref{delta-ineq} converges to zero as $N\rightarrow\infty$, which in turn implies that $\delta _{n}(t)=0$ for all $n\geq1$. Thus $\bar{q}^{(1)}(t)=\bar{q}^{(2)}(t)$ for $t\in [0,t_0)$.

These arguments can be repeated on each of the time intervals $[t_k,t_{k+1})$ with $t_k:=kt_0$, $k=1,2,...$, which shows that $\bar{q}^{(1)}(t)=\bar{q}^{(2)}(t)$, $t\in [0,T)$, for an arbitrary $T>0$.
\end{proof}

\section{Dynamics of interacting particle systems}\label{IAS}

Our main example is motivated by the study of (deterministic) dynamics of
interacting particle systems. In this case, $\gamma$ represents a collection of
particles indexed by elements $x$ of $X$, may be interpreted as particle positions. A particle with position $x\in \gamma$ carries an
internal parameter (spin) $\sigma _{x}\in S$. While the positions of our particles are fixed, the spins evolve according to system \eqref{ODE}. Here we will consider two types of the time-evolution with pair spin-spin interaction- general gradient-type dynamics and the Hamiltonian anharmonic dynamics. In the last subsection, we give  an example of somewhat different type, motivated by the study of self-organized systems.

\subsection{Gradient-type dynamics}\label{gtd}
Consider the following example of the family $(F_{x})_{x\in\gamma}$ that fulfills Condition~\ref{cond:row-finite}:
\begin{equation}
F_{x}(\bar{q}(t))=R_{x} (\bar{q}_{x,r}(t)) -\nabla U_{x}(q_{x}(t)), \quad
x\in\gamma,   \label{F}
\end{equation}
with
\begin{equation}
R_{x}(\bar{q})=\sum_{y\sim x, \, y\ne x}W_{xy}(q_{x},q_{y}),\quad x\in \gamma .
\label{R-pair}
\end{equation}
Here, for each $x\in\gamma$, we denote $\bar{q}_{x,r}(t):=\left( q_{y}(t)\right) _{y\in \gamma_{x,r}}$ and assume that $U_x\in C^2(S,\R_+)$, and hence $\nabla U_{x}\in C^{1}(S,S)$. Next, for each
$\{x,y\}\subset \gamma $, $x\sim y$, we assume that $W_{xy}\in C^{2}(S^{2},S)$.

\begin{proposition}\label{prop:suffforexist}
Let, for some $k\geq1$, $j\geq 2k$, $J_U>0$, condition \eqref{c01} holds, and condition \eqref{c0} is reinforced to
\begin{alignat}{2}
U_{x}(q)&\geq J_U\left\vert q\right\vert^{2k} , &&\qquad q\in S, \ x\in\gamma.\label{U}
\\\intertext{Let also, for some $J_W>0$,}
		\bigl\vert W_{xy}(q_1,q_2)\bigr\vert &\leq J_{W}\bigl( \left\vert
q_1\right\vert ^{k}+\left\vert q_2\right\vert ^{k}+1\bigr), &&\qquad q_1,q_2\in S, \ x,y\in\gamma.  \label{W}
\end{alignat}
Then there exists $C>0$ such that Condition~\ref{cond:diss} holds with $m=1$.
\end{proposition}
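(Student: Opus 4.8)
The plan is to use the gradient‑descent term $-\nabla U_x(q_x)$ appearing in \eqref{F} to absorb the pair‑interaction term $R_x$. Writing out the inner product,
\begin{equation*}
F_x(\bar q)\cdot\nabla U_x(q_x)=R_x(\bar q_{x,r})\cdot\nabla U_x(q_x)-\bigl\lvert\nabla U_x(q_x)\bigr\rvert^2 ,
\end{equation*}
the whole point is to show that the first term on the right is, up to harmless combinatorial factors, dominated by $\lvert\nabla U_x(q_x)\rvert^2$ plus an expression of the form $C\sum_{y\sim x}(n_xn_y)U_y(q_y)$ in which $\nabla U_x$ no longer appears. First one disposes of the trivial case $\gamma_{x,r}=\{x\}$: then $R_x=0$ by \eqref{R-pair} and the left‑hand side is $\le 0$, so \eqref{eq:conddiss} holds. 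Otherwise put $N_x:=n_x-1\ge 1$, the number of neighbours $y\sim x$ with $y\ne x$.

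The key step is a term‑by‑term Young inequality. Estimating $R_x(\bar q_{x,r})\cdot\nabla U_x(q_x)\le\sum_{y\sim x,\,y\ne x}\lvert W_{xy}(q_x,q_y)\rvert\,\lvert\nabla U_x(q_x)\rvert$ and applying $ab\le\tfrac{1}{2N_x}a^2+\tfrac{N_x}{2}b^2$ to each of the $N_x$ summands — so that the $\lvert\nabla U_x(q_x)\rvert^2$‑contributions add up to exactly $\tfrac12\lvert\nabla U_x(q_x)\rvert^2$ — gives, after summation and cancellation against $-\lvert\nabla U_x(q_x)\rvert^2$,
\begin{equation*}
F_x(\bar q)\cdot\nabla U_x(q_x)\le\frac{N_x}{2}\sum_{y\sim x,\,y\ne x}\bigl\lvert W_{xy}(q_x,q_y)\bigr\rvert^2-\frac12\bigl\lvert\nabla U_x(q_x)\bigr\rvert^2\le\frac{N_x}{2}\sum_{y\sim x,\,y\ne x}\bigl\lvert W_{xy}(q_x,q_y)\bigr\rvert^2 .
\end{equation*}
The gradient has now been eliminated.

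It remains to convert the right‑hand side into the required shape. Inserting \eqref{W} and using $(a+b+c)^2\le 3(a^2+b^2+c^2)$ yields $\lvert W_{xy}(q_x,q_y)\rvert^2\le 3J_W^2(\lvert q_x\rvert^{2k}+\lvert q_y\rvert^{2k}+1)$, hence
\begin{equation*}
F_x(\bar q)\cdot\nabla U_x(q_x)\le\frac{3J_W^2}{2}\Bigl(N_x^2\lvert q_x\rvert^{2k}+N_x\!\!\sum_{y\sim x,\,y\ne x}\!\!\lvert q_y\rvert^{2k}+N_x^2\Bigr).
\end{equation*}
The lower bound \eqref{U} turns $\lvert q_x\rvert^{2k}\le U_x(q_x)/J_U$ and $\lvert q_y\rvert^{2k}\le U_y(q_y)/J_U$; since $N_x\le n_x$ and $n_y\ge 1$, the first term is $\le J_U^{-1}(n_xn_x)U_x(q_x)$ (the $y=x$ summand in the target sum) and the second is $\le J_U^{-1}\sum_{y\sim x,\,y\ne x}(n_xn_y)U_y(q_y)$. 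The only term not yet of the desired form is the constant $N_x^2$; it is absorbed by noting that Condition~\ref{cond:diss} only requires \emph{some} admissible family of Lyapunov functions, and replacing $\{U_x\}$ by $\{U_x+1\}$ leaves $F_x$ in \eqref{F} unchanged (the gradient is the same), preserves \eqref{c01} and the lower bounds, and makes $N_x^2\le n_x\sum_{y\sim x,\,y\ne x}1\le\sum_{y\sim x}(n_xn_y)\bigl(U_y(q_y)+1\bigr)$. Collecting the estimates gives \eqref{eq:conddiss} with $m=1$ and $C$ a fixed multiple of $J_W^2/J_U+J_W^2$.

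The genuinely essential point is the Young‑inequality balancing: the dissipative term $-\lvert\nabla U_x\rvert^2$ has to absorb the interaction term exactly, and the bookkeeping must leave the combinatorial weight at $N_x^2\le n_x^2$, which is precisely what permits $m=1$ in \eqref{eq:conddiss}. The secondary, minor technical point is the absorption of the lower‑order (constant) contributions coming from the additive ``$+1$'' in \eqref{W}; everything else, including the fact that $j\ge 2k$ is merely a compatibility condition between \eqref{U} and \eqref{c01} and is not used directly, is routine.
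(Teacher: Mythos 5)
Your argument is correct, and its skeleton is the same as the paper's: write $F_x\cdot\nabla U_x=R_x\cdot\nabla U_x-|\nabla U_x|^2$, absorb the gradient by Young's inequality, bound $|W_{xy}|^2$ via \eqref{W}, and compare with $\sum_{y\sim x}(n_xn_y)U_y(q_y)$ using \eqref{U}; your per-term weighted Young inequality (giving $\tfrac{N_x}{2}\sum_{y\ne x}|W_{xy}|^2$) is only cosmetically different from the paper's cruder $R_x\cdot\nabla U_x\le|\nabla U_x|^2+|R_x|^2$ followed by Cauchy--Schwarz. The one genuine divergence is your treatment of the additive constant coming from the ``$+1$'' in \eqref{W}. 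The paper disposes of it by writing the lower bound $n_x\sum_{y\sim x}n_yU_y(q_y)\ge n_x\sum_{y\sim x,\,y\ne x}J_U\bigl(|q_y|^{2k}+1\bigr)+n_x^2J_U\bigl(|q_x|^{2k}+1\bigr)$, i.e.\ it implicitly uses $U_y(q)\ge J_U\bigl(|q|^{2k}+1\bigr)$, which does not follow from \eqref{U}; indeed, for $U_x(q)=J_U|q|^{2k}$ and $W_{xy}\equiv J_W$ the inequality \eqref{eq:conddiss} with the unshifted family fails near $\bar q=0$ (the left-hand side is of order $\epsilon^{2k-1}$ at $q_x=\epsilon$, $q_y=0$, while the right-hand side is of order $\epsilon^{2k}$), so some repair is needed. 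Your replacement of the Lyapunov family $\{U_x\}$ by $\{U_x+1\}$ is exactly such a repair: it leaves $F_x$ unchanged, preserves \eqref{c0}, \eqref{U}, \eqref{c01} (with $C_2+1$), and Condition~\ref{cond:diss} only asks for \emph{some} admissible family, so the conclusion needed for Theorem~\ref{theor-main} is intact. In short, your proof buys a correct handling of the constant term under the hypotheses as stated, at the price of verifying the Condition for the shifted functions rather than for the $U_x$ appearing in \eqref{F}; the paper's version is shorter but, as written, uses a lower bound on $U_y$ that its assumptions do not provide (equivalently, it tacitly strengthens \eqref{U} to $U_x(q)\ge J_U(|q|^{2k}+1)$). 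Your remark that $j\ge 2k$ is only a compatibility requirement between \eqref{U} and \eqref{c01} is also consistent with the paper's proof, which never invokes it.
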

\begin{proof}
First we note that for $F$ of the form \eqref{F}  condition \eqref{eq:conddiss} reads as
\begin{equation}
R_{x}(\bar{q})\cdot \nabla U_{x}(q_{x})-\left\vert \nabla
U_{x}(q_{x})\right\vert ^{2}\leq C n_x^m \sum_{y\sim x} n_y^m U_y(q_y)
\label{c1-old}
\end{equation}
for any $\bar{q}\in S^{\gamma_{x,r}}$ and $x\in\gamma $. The inequality
$R_{x}\cdot \nabla U_{x}\leq \left\vert \nabla U_{x}\right\vert
^{2}+\left\vert R_{x}\right\vert ^{2}$ implies that the following bound is sufficient for \eqref{c1-old} to hold:
\begin{equation}
\left\vert R_{x}(\bar{q})\right\vert ^{2}\leq C  n_x^m  \sum_{y\sim x}n_y^m U_y(q_y), \quad \bar{q}\in S^\gamma, \ x\in\gamma.  \label{c11}
\end{equation}
We are going to check now \eqref{c11} with $m=1$, for $R_x$ given by \eqref{R-pair} under the conditions above.
 By \eqref{R-pair} and \eqref{W}, there exist $J_{1},J_{2},J_{3}>0$, such that
\begin{equation*}
\left\vert R_{x}(\bar{q})\right\vert ^{2}\leq J_{1}n_{x}\sum_{y\sim
x,\, y\neq x}\left\vert q_{y}\right\vert ^{2k}+J_{2}n_{x}^{2}\left\vert
q_{x}\right\vert ^{2k}+J_{3}n_{x}.
\end{equation*}
On the other hand, \eqref{U} yields
\begin{align*}
n_x \sum_{y\sim x } n_y U_{y}(q_{y})&\geq n_{x}\sum_{y\sim x,\, y\neq x}J_{U}\bigl(
\left\vert q_{y}\right\vert ^{2k}+1\bigr)
+n_{x}^{2}J_{U}\bigl( \left\vert q_{x}\right\vert ^{2k}+1\bigr) \\
&=J_{U}n_{x}\sum_{y\sim x, \, y\neq x}\left\vert q_{y}\right\vert
^{2k}+J_{U} n_{x}^{2} \left\vert q_{x}\right\vert ^{2k}+2J_{U}n_{x}^{2}-J_U n_x.
\end{align*}
Thus \eqref{c11} holds with $m=1$ and $C=\max \left\{
J_{1},J_{2},J_{3}\right\} /J_{U}$ (we used also that $n_x\geq1$ as $x\in\gamma_{x,r}$).
\end{proof}

\begin{proposition}\label{prop:uniq}
Assume that the interaction potentials $W$ and $U$ satisfy the bounds
\begin{alignat}{2}
\left\Vert \nabla ^{2}U_{y}(q)\right\Vert &\leq K_U \bigl(|q|^{j-2}+1\bigr), &&\quad q\in S, \ x\in \gamma, \label{Cond-un-self} \\
\left\Vert \frac{\partial }{\partial q_2}W_{xy}(q_1,q_1)\right\Vert&\leq K_W \bigl(|q_1|^{k-1}+|q_2|^{k-1}+1\bigr), &&\quad q_1,q_2\in S,\ x,y\in\gamma,  \label{Cond-un-pair}
\end{alignat}
for some $k\geq1$, $j\geq 2k$ and $K_U,K_W>0$. Let the pair $(w,z)$ satisfy Condition~\ref{cond:balance} and, moreover, the inequality
\begin{equation}
z(s)w(s)^{\beta(k-1)}+w(s)^{\beta(j-2)}\leq K_1 s+K_2,  \quad s\in\R_+.\label{eq:weights}
\end{equation}
holds for some $K_1,K_2>0$.
Then Condition~\ref{cond:uniq} holds.
\end{proposition}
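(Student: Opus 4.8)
The plan is to compute the gradient $\nabla F_{x}$ explicitly for $F_{x}$ of the form \eqref{F}--\eqref{R-pair}, estimate each of its finitely many matrix components on the ball $\Delta_{\beta,R}$, and then reduce the resulting bound to the weight inequality \eqref{eq:weights}.

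First I would differentiate \eqref{F}--\eqref{R-pair}: since $W_{xy}\in C^{2}(S^{2},S)$ and $\nabla U_{x}\in C^{1}(S,S)$, the Jacobian of $F_{x}$ with respect to $q_{y}$ vanishes unless $y\in\gamma_{x,r}$ (consistently with Condition~\ref{cond:row-finite}), while
\[
\frac{\partial F_{x}(\bar q)}{\partial q_{x}}=\sum_{y\sim x,\,y\ne x}\frac{\partial}{\partial q_{1}}W_{xy}(q_{x},q_{y})-\nabla^{2}U_{x}(q_{x}),\qquad
\frac{\partial F_{x}(\bar q)}{\partial q_{y}}=\frac{\partial}{\partial q_{2}}W_{xy}(q_{x},q_{y})\quad(y\sim x,\ y\ne x).
\]
Hence $\bigl\bracevert\!\! \nabla F_{x}(\bar q)\!\!\bigr\bracevert$ is dominated by $\lVert\nabla^{2}U_{x}(q_{x})\rVert$ plus a sum over the (at most $n_{x}-1$) neighbours $y\sim x$, $y\ne x$, of $\lVert\frac{\partial}{\partial q_{1}}W_{xy}(q_{x},q_{y})\rVert+\lVert\frac{\partial}{\partial q_{2}}W_{xy}(q_{x},q_{y})\rVert$.

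Next I would insert the a priori bounds coming from $\bar q=(q_{x})_{x\in\gamma}\in\Delta_{\beta,R}$. By \eqref{defSalphagamma} we have $|q_{x}|\le R\,w(|x|)^{\beta}$, and since $y\sim x$ forces $|y|\le|x|+r$ while $w\in\mathcal R$, also $|q_{y}|\le R\,w_{r}^{\beta}\,w(|x|)^{\beta}$, with $w_{r}$ as in \eqref{eq:condw}. Feeding these into \eqref{Cond-un-self} and \eqref{Cond-un-pair} (the latter used for the derivative of $W_{xy}$ in each of its two arguments), and using $n_{x}\le z(|x|)$ from \eqref{eq:nx} to count the terms, I would obtain a constant $c=c(R,\beta,j,k,K_{U},K_{W},w_{r})>0$ with
\[
\sup_{\bar q\in\Delta_{\beta,R}}\bigl\bracevert\!\! \nabla F_{x}(\bar q)\!\!\bigr\bracevert\le c\Bigl(w(|x|)^{\beta(j-2)}+z(|x|)\,w(|x|)^{\beta(k-1)}+z(|x|)+1\Bigr),\qquad x\in\gamma.
\]
Finally, since $w\ge1$ and $\beta(k-1)\ge0$ give $z(|x|)\le z(|x|)\,w(|x|)^{\beta(k-1)}$, I would bound the right-hand side by $c'\bigl(z(|x|)w(|x|)^{\beta(k-1)}+w(|x|)^{\beta(j-2)}+1\bigr)$ for a suitable $c'>0$, and then invoke \eqref{eq:weights} to turn this into $c''(|x|+1)=:C_{R}(|x|+1)$, which is precisely Condition~\ref{cond:uniq}.

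The individual estimates are routine; the only points that need attention are (i) that \eqref{Cond-un-pair} must be applied to the partial derivative of $W_{xy}$ in \emph{each} of its two arguments — the first one entering through the diagonal block $\partial F_{x}/\partial q_{x}$ via the sum $\sum_{y\sim x,\,y\ne x}\partial_{q_{1}}W_{xy}$ — and (ii) that every constant must depend only on $R$ and the fixed data of the problem, never on $x$; this is guaranteed because $w_{r}<\infty$ (as $w\in\mathcal R$, which is part of Condition~\ref{cond:balance}) and because the bound \eqref{eq:weights} is uniform in $s\in\R_{+}$. So I expect no real obstacle beyond careful bookkeeping.
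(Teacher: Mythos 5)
Your overall route is essentially the paper's: reduce Condition~\ref{cond:uniq} for $F_x$ of the form \eqref{F}--\eqref{R-pair} to a bound on $\sum_{y}\bigl\lVert \partial F_{x}/\partial q_{y}\bigr\rVert$ over the ball $\Delta_{\beta,R}$, insert $|q_x|\leq R\,w(|x|)^{\beta}$ and, for $y\sim x$, $|q_y|\leq R\,w_r^{\beta}w(|x|)^{\beta}$ via \eqref{eq:condw}, count the neighbours with $n_x\leq z(|x|)$ from \eqref{eq:nx}, and close with \eqref{eq:weights}. The one place where you diverge is the diagonal block $\partial F_{x}/\partial q_{x}$: you correctly compute it as $\sum_{y\sim x,\,y\neq x}\partial_{q_1}W_{xy}(q_x,q_y)-\nabla^{2}U_{x}(q_x)$, but then propose to bound the $W$-part by applying \eqref{Cond-un-pair} ``in each of its two arguments''. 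That step is not covered by the stated hypotheses: \eqref{Cond-un-pair} controls only the derivative of $W_{xy}$ with respect to its \emph{second} argument, and nothing in Proposition~\ref{prop:uniq} bounds $\partial_{q_1}W_{xy}$. As written, your estimate of the diagonal block therefore invokes an assumption you do not have.

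For comparison, the paper sidesteps this by asserting that $\partial R_{x}(\bar q_{x,r})/\partial q_{x}$ is the zero matrix, so that the diagonal block reduces to $-\nabla^{2}U_{x}(q_x)$ and only \eqref{Cond-un-self} is needed there; this amounts to treating $R_x$ as independent of $q_x$ (note it is not literally true for the motivating example $W_{xy}(q_1,q_2)=V(q_1-q_2)$, where the two partial derivatives coincide up to sign). Your version is easy to repair: either adopt that reading, or add the symmetric hypothesis $\bigl\lVert \partial_{q_1}W_{xy}(q_1,q_2)\bigr\rVert\leq K_W\bigl(|q_1|^{k-1}+|q_2|^{k-1}+1\bigr)$, which holds automatically for $W_{xy}=V(q_1-q_2)$ under the bound on $V'$; the extra term then contributes another $c\,z(|x|)\,w(|x|)^{\beta(k-1)}$, which is of exactly the form already present, so \eqref{eq:weights} still yields Condition~\ref{cond:uniq} as you describe. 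Everything else in your proposal (the vanishing of the off-row blocks, the counting via $z$, the role of $w_r<\infty$, the final reduction to \eqref{eq:weights} with constants depending only on $R$, $\beta$ and the fixed data) matches the paper's argument.
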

\begin{proof}
Again,  we note that  for $F$ as in \eqref{F} Condition~\ref{cond:uniq} obtains the following form: for any $R>0$ there exists a constant $C_R>0$ such that
\begin{equation*}
\sup_{\bar{q}\in \Delta _{\beta ,R}}\sum_{y\in \gamma_{x,r} }\left\Vert
\frac{\partial R_{x}(\bar{q}_{x,r})}{\partial q_{y}}-  1\!\!1_{\{y=x\}} \nabla
^{2}U_{x}(q_{x})\right\Vert \leq C_R\left( \lvert x\rvert +1\right), \quad x\in\gamma,
\end{equation*}
where $\nabla^2$ is the Hessian matrix. Therefore, it is sufficient to assume that
\begin{equation}
\sup_{\bar{q}\in \Delta _{\beta ,R}}\Biggl( \sum_{y\in \gamma_{x,r}
}\left\Vert \frac{\partial R_{x}(\bar{q}_{x,r})}{\partial q_{y}}\right\Vert
+\left\Vert \nabla ^{2}U_{x}(q_{x})\right\Vert \Biggr) \leq C_R\left(
\lvert x\rvert +1\right), \quad x\in\gamma.\label{unique-pair}
\end{equation}

Let now $R_x$ be given by \eqref{R-pair}. Then, $\frac{\partial R_{x}(\bar{q}_{x,r})}{\partial q_{x}}$ is the zero matrix, and \eqref{unique-pair} can be rewritten, for each $x\in\gamma$, as follows
\begin{equation}
\sup_{\bar{q}\in \Delta _{\beta ,R}}\Biggl( \sum_{y\sim x,\, y\neq x
}\left\Vert \frac{\partial }{\partial q_{y}}W_{xy}(q_x,q_y)\right\Vert
+\left\Vert \nabla ^{2}U_{x}(q_{x})\right\Vert \Biggr) \leq C_R\left(
\lvert x\rvert +1\right).\label{unique-pair-indeed}
\end{equation}
Recall that $\bar{q}\in \Delta _{\beta ,R}$ implies
\[
	|q_x|\leq R w(|x|)^\beta, \quad x\in\gamma.
\]
Note that one can assume $R\geq1$. Then, by \eqref{Cond-un-pair}, \eqref{eq:condw}, \eqref{eq:nx}, we have
\begin{align*}
\sum_{y\sim x,\, y\neq x
}\left\Vert \frac{\partial }{\partial q_{y}}W_{xy}(q_x,q_y)\right\Vert&\leq K_W R^k
\sum_{y\sim x,\, y\neq x} \bigl(w(|x|)^{\beta (k-1)}+w(|y|)^{\beta (k-1)}+1\bigr)\\
&\leq  K_W R^k \bigl( n_x w(|x|)^{\beta (k-1)}+n_x w(|x|+r)^{\beta (k-1)}+1 \bigr)\\
&\leq K_W R^k(2+w_r^{\beta(k-1)}) z(|x|)w(|x|)^{\beta (k-1)}.
\end{align*}
Next, by \eqref{Cond-un-self},
\[
	\left\Vert \nabla ^{2}U_{x}(q_{x})\right\Vert\leq 2 K_U R^{j-2}w(|x|)^{\beta(j-2)}.
\]
As a result, by using \eqref{eq:weights}, we get \eqref{unique-pair-indeed}.
\end{proof}

\begin{remark}
A typical example of the pair interaction is
\[
	W_{xy}(q_x,q_y)=V(q_x-q_y), \qquad V\in C^2(S,S).
\]
Then, to ensure \eqref{W}, it is enough to assume that
\[
	|V(q)|\leq J_V(|q|^k+1), \quad q\in S
\]
for some $J_V>0$. Moreover, $\bigl\rVert\frac{\partial }{\partial q}V(q)\bigr\rVert=\bigl\rVert\frac{\partial }{\partial q_2}W(q_1,q_2)\bigr\rVert$ for $q=q_1-q_2$, and hence \eqref{Cond-un-pair} holds if only, for some $K_V>0$,
\[
	\biggl\rVert\frac{\partial }{\partial q}V(q)\biggr\rVert\leq K_V(|q|^{k-1}+1), \quad q\in S.
\]
\end{remark}

Now we will revisit our examples of admissible pairs and study the corresponding conditions of the existence and uniqueness of the corresponding dynamics.   Recall that the exponents $j$ and $k$ in \eqref{c01}, \eqref{W}, \eqref{Cond-un-self}, \eqref{Cond-un-pair} are always related by the assumption $j\geq 2k$, $k\geq1$.
\begin{example}[Example~\ref{ex:max} revisited]\label{ex:maxrev}
 Let \eqref{nz} hold. Then
Theorem~\ref{theor-main} holds with $w(s)=e^{s+e}$. Thus a solution with initial value
\[
|q_x(0)|\leq c \exp(\alpha|x|) , \quad x\in\gamma,
\]
will live in each of the spaces $S^\gamma_\beta$ and thus satisfy the bound
\[
|q_x(t)|\leq c(t) \exp(\beta|x|), \quad x\in\gamma,\,t>0,\label{eq:beta}
\]
for all $\beta>j\alpha$
(here $c(t)=c(\alpha,\beta,t)$).
It is clear that the uniqueness condition \eqref{eq:weights} of Proposition~\ref{prop:uniq} holds only for $k=1,\,j=2$.

\end{example}

\begin{example}[Example~\ref{ex:med} revisited]\label{ex:medrev}
Let \eqref{eq:nx} hold, $z$ satisfy \eqref{mednz}, and $w(s)=1+s$.
Assume without loss of generality that $j>2$.  Then for $1\leq k\leq \frac{j}{2}$ we have
\[
	z(s)w(s)^{\beta(k-1)}+w(s)^{\beta(j-2)}\leq z(s)(1+s)^{\frac{1}{2}}+(1+s),
\]
so that \eqref{eq:weights} holds in the corresponding $S^\gamma_\beta$ with any $\beta\le(j-2)^{-1}$. Thus we can apply both Theorems~\ref{theor-main} and~\ref{theor-uniq} with arbitrary $\alpha<j^{-1}(j-2)^{-1}$ and $\beta\in(j\alpha,(j-2)^{-1}]$. As a result,
for any initial data
\[
|q_x(0)|\leq c (1+|x|)^{\frac{1}{j(j-2)}}, \quad x\in\gamma,
\]
there exists the unique solution satisfying the estimate
\[
|q_x(t)|\leq c(t) (1+|x|)^{\frac{1}{j-2}}, \quad x\in\gamma.
\]
Moreover,  the stronger bound
\[
|q_x(t)|\leq c(t) (1+|x|)^{\frac{1}{\beta}}, \quad x\in\gamma.
\]
will hold for all $\beta\in(j\alpha,(j-2)^{-1}]$.
\end{example}

\begin{example}
A simple example of interactions satisfying assumptions \eqref{U}--\eqref{Cond-un-pair}
 is given by
\begin{equation*}
W_{xy}(q_{x},q_{y})=Jq_{y},\ J\in \R,
\end{equation*}
and
\begin{equation*}
U_{y}(q_{y})=aq_{y}^{2}-b,\ a,b>0.
\end{equation*}
In this case (\ref{ODE}) is just the linear system
\begin{equation*}
\dot{q}_{x}=-aq_{x}+J\sum_{y\sim x}q_{y}.
\end{equation*}
The existence and uniqueness of solutions in any $S_{\beta }^{\gamma }$,
which follows from Theorem~\ref{theor-main}, can be proved by a
direct application of Theorem~\ref{Th-ovs}.
\end{example}

\subsection{Infinite Anharmonic Systems}\label{ias}

Consider the Hamiltonian system
\begin{equation*}
\dot{q}_{x}=p_{x},\ \dot{p}_{x}=R_{x}(\bar{q})-\nabla U_{x}(q_{x}),\ x\in\gamma,
\end{equation*}
with $R_x$ and $U_x$ as in the previous section.
It fits the framework of Section~\ref{PDER} with the \textquotedblleft
double\textquotedblright\ state space $S\times S\ni (\bar{q},\bar{p})$ in
place of $S$ and
\begin{equation*}
F_{x}(\bar{q},\bar{p})=\left( \bar{p}_x,\left( R_{x}(\bar{q})-\nabla
U_{x}(q_{x})\right)\right) \in S\times S,\,  {x\in \gamma },
\end{equation*}
and the single-particle Hamiltonian
\begin{equation*}
H_{x}(q,p)=\frac{1}{2}\left\vert p\right\vert ^{2}+U_{x}(q)
\end{equation*}
replacing $U_x$ in Condition~\ref{cond:diss}.
\begin{proposition}
Assume that $R$ and $U$ satisfy \eqref{c11} resp. \eqref{unique-pair}. Then the modified version of Condition~\ref{cond:diss} resp. Condition~\ref{cond:uniq} holds.
\end{proposition}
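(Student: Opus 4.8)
The plan is to treat the two assertions in turn, each paralleling the corresponding argument for the gradient-type system (Propositions~\ref{prop:suffforexist} and~\ref{prop:uniq}), the only genuinely new ingredient being one computation that exploits the Hamiltonian structure. Throughout, write $\nabla H_x(q,p)=\bigl(\nabla U_x(q),\,p\bigr)\in S\times S$; since $U_x\in C^2(S,\R_+)$ this gives $H_x\in C^2(S\times S,\R_+)$. For the modified Condition~\ref{cond:diss} I would first check that $H_x$ satisfies the analogues of \eqref{c0}--\eqref{c01} on $S\times S$: the upper bound is immediate with exponent $\max\{j,2\}$, and the lower bound $H_x(q,p)\ge C_1'|(q,p)|$ for $|(q,p)|\ge1$ follows from \eqref{c0} (using the uniform lower bound \eqref{U} to control the transition region where $|q|$ is close to $1$) by a short case distinction according to whether $|q|$ or $|p|$ carries the bulk of $|(q,p)|$. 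The key point is then the energy-balance identity: since $F_x(\bar q,\bar p)=\bigl(p_x,\,R_x(\bar q)-\nabla U_x(q_x)\bigr)$,
\[
F_x(\bar q,\bar p)\cdot\nabla H_x(q_x,p_x)=p_x\cdot\nabla U_x(q_x)+\bigl(R_x(\bar q)-\nabla U_x(q_x)\bigr)\cdot p_x=R_x(\bar q)\cdot p_x ,
\]
the two potential contributions cancelling.

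Next I would apply Young's inequality, $R_x(\bar q)\cdot p_x\le\frac12|R_x(\bar q)|^2+\frac12|p_x|^2$, bound $|R_x(\bar q)|^2$ by the hypothesis \eqref{c11}, and use $U_y(q_y)\le H_y(q_y,p_y)$ together with $\frac12|p_x|^2\le H_x(q_x,p_x)$. Since $x\in\gamma_{x,r}$, i.e.\ $x\sim x$, and $(n_xn_x)^m\ge1$, the term $\frac12|p_x|^2$ is absorbed into $\sum_{y\sim x}(n_xn_y)^mH_y(q_y,p_y)$, while $\frac C2\,n_x^m\sum_{y\sim x}n_y^mU_y(q_y)\le\frac C2\sum_{y\sim x}(n_xn_y)^mH_y(q_y,p_y)$. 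This yields $F_x(\bar q,\bar p)\cdot\nabla H_x(q_x,p_x)\le(\frac C2+1)\sum_{y\sim x}(n_xn_y)^mH_y(q_y,p_y)$, which is the modified Condition~\ref{cond:diss}, with the same $m$.

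For the modified Condition~\ref{cond:uniq}, the gradient of $F_x$ is now taken in the full family of variables $(q_y,p_y)_{y\in\gamma}$, and the ball $\Delta_{\beta,R}$ is taken in the doubled scale $(S\times S)^\gamma_\beta$. I would compute the block Jacobian of $F_x$: its first component $\bar p_x=p_x$ contributes only the identity block $\partial F_x/\partial p_x$, a bounded ($\le1$) contribution to the $\bigl\bracevert\cdot\bigr\bracevert$-norm, while its second component $R_x(\bar q_{x,r})-\nabla U_x(q_x)$ depends on the $q$-variables alone and contributes precisely the sum on the left-hand side of the hypothesis \eqref{unique-pair}. Hence $\bigl\bracevert\nabla F_x(\bar q,\bar p)\bigr\bracevert\le1+C_R(|x|+1)\le C_R'(|x|+1)$ on $\Delta_{\beta,R}$ (using $|x|+1\ge1$), which is the modified Condition~\ref{cond:uniq}.

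The only substantive ingredient is the cancellation in the energy-balance identity; once it is in hand both statements reduce to estimates of the type already carried out for the gradient-type system, now applied to $H_x$ in place of $U_x$. The one mildly technical step is the case analysis showing that $H_x$ inherits the lower bound \eqref{c0} uniformly in $x$, so I do not anticipate any genuine obstacle.
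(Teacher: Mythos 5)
Your proof is correct and follows essentially the same route as the paper: the cancellation $F_x\cdot\nabla H_x=p_x\cdot R_x(\bar q)$, Young's inequality combined with \eqref{c11} and absorption of $\tfrac12|p_x|^2$ into the sum (using $x\sim x$ and $H_y\ge U_y$), and the block-Jacobian computation reducing the uniqueness bound to \eqref{unique-pair}. Your extra check that $H_x$ inherits \eqref{c0}--\eqref{c01} (via the strengthened bound \eqref{U}) is a point the paper leaves implicit, but it does not alter the argument.
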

\begin{proof}
We clearly have
\begin{equation*}
F_{x}(\bar{q},\bar{p})\cdot \nabla H_{x}(q_{x},p_{x})=p_{x}\cdot R_{x}(\bar{q}) \leq \frac{1}{2}\left( \left\vert p_{x}\right\vert ^{2}+\left\vert R_{x}(\bar{q})\right\vert ^{2}\right) .
\end{equation*}
Inequality (\ref{c11}) implies now that
\begin{multline*}
F_{x}(\bar{q},\bar{p})\cdot \nabla H_{x}(q_{x},p_{x})\leq \frac{1}{2}
\left\vert p_{x}\right\vert ^{2}+\frac{1}{2}Cn_{x}^{m}\sum_{y\sim
x}n_{y}^{m}U_{y}(q_{y}) \\
\leq \frac{1}{2}Cn_{x}^{m}\sum_{y\sim x}n_{y}^{m}H_{y}(q_{y},p_{y}),
\end{multline*}
and the modified Condition~\ref{cond:diss} holds. A direct
calculation shows that (\ref{unique-pair}) implies Condition~\ref{cond:uniq}.
\end{proof}

\subsection{Self-alignment dynamics}\label{sad}
This example is motivated by various models of self-organized dynamics based on the alignment. We refer to the review in \cite{MT} and references therein for the theory and applications of these models in biological, physical and social sciences. In this framework, equation  \eqref{ODE} takes the form
\begin{equation}\label{eq:self-align}
		\frac{d}{dt}q_x(t)=\sum_{y\sim x} g_{x,y}\bigl(\bar{q}_{x,r}(t)\bigr) \bigl(q_y(t)-q_x(t)\bigr),
\end{equation}
where $g_{x,y}\in C^1(S^{\gamma_{x,r}},\mathbb{R}),\,x,y\in\gamma$,  is a family of non-negative uniformly bounded  functions.

Note that in \cite{MT} the underlying set $\gamma$ is supposed to be finite and either fixed or allowed to evolve.
Thus \eqref{eq:self-align} can be considered as the infinite-particle (\textquotedblleft quenched\textquotedblright ) version of these models.

We consider two examples inspired by the so-called opinion dynamics, in which functions $g_{x,y}$ are given by either
\begin{equation}\label{eq:sym}
		g_{x,y}\bigl(\bar{q}_{x,r}\bigr)=\frac{\phi(|q_y-q_x|)}{n_x}, \qquad x\in\gamma,\ y\sim x,
\end{equation}
or
\begin{equation}\label{eq:nonsym}
		g_{x,y}\bigl(\bar{q}_{x,r}\bigr)=\dfrac{\phi(|q_y-q_x|)}{\sum\limits_{z\sim x}\phi(|q_z-q_x|)}, \qquad x\in\gamma,\ y\sim x,
\end{equation}
with $0<\phi(s)\leq\phi_0$, $s>0$ for some $\phi_0>0$.

Let us prove that Condition~\ref{cond:diss} holds with $U_x(q_x):=\frac{1}{2}|q_x|^2$, $x\in\gamma$. Indeed,
\begin{align*}
F_x(\bar{q})\cdot\nabla U_x(q_x)&=\sum_{y\sim x} g_{x,y}\bigl(\bar{q}_{x,r}\bigr) \bigl(q_y\cdot q_x-|q_x|^2\bigr)\\
&\leq \sum_{y\sim x} g_{x,y}\bigl(\bar{q}_{x,r}\bigr) \frac{1}{2}\bigl(|q_y|^2-|q_x|^2\bigr)\leq G \sum_{y\sim x} U_y(q_y),
\end{align*}
for some constant $G>0$, and hence \eqref{eq:conddiss} holds with $m=1$ and an arbitrary $n_x\geq1$, $x\in\gamma$. In particular, there are no further restrictions on the function $z$ in Condition~\ref{cond:balance}.

Next, to check Condition~\ref{cond:uniq}, we note that
\[
	\frac{\partial}{\partial q_y} F_x(\bar{q})=\sum_{y\sim x} g_{x,y}\bigl(\bar{q}_{x,r}\bigr) 1\!\!1 + \sum_{y\sim x} \frac{\partial}{\partial q_y}g_{x,y}\bigl(\bar{q}_{x,r}\bigr)\times q_y,
\]
where $u\times v$ denotes (for $u,v\in S=\mathbb{R}^\nu$) the matrix $(u_iv_j)_{1\leq i,j\leq \nu}$, and $1\!\!1$ is the identity matrix on $S$. We consider $g_{x,y}$ given by \eqref{eq:sym} and \eqref{eq:nonsym}, respectively.

In the case of \eqref{eq:sym} we have
\[
	\frac{\partial}{\partial q_y}g_{x,y}\bigl(\bar{q}_{x,r}\bigr)=\frac{\phi'(|q_y-q_x|)}{n_x}\frac{q_y}{|q_y|}.
\]
Observe that $\lVert q_y\times q_y\rVert\leq c |q_y|^2$, where the constant $c>0$ depends only on the choice of norm on $S=\mathbb{R}^\nu$. Thus we obtain the inequality
\[
	\biggl\lVert \frac{\partial}{\partial q_y} F_x(\bar{q})\biggr\rVert \leq \phi_0 + \sup_{s\geq 0}|\phi'(s)| \sum_{y\sim x} \frac{c |q_y|}{n_x}.
\]
Therefore, assuming that $|\phi'(s)|\leq \phi_1$ for $s\geq 0$, we see that
\[
	\bigl\bracevert\!\! \nabla F_{x}(\bar{q})\!\!\bigr\bracevert\leq \phi_0 n_x+
	\frac{c \phi_1 }{n_x}\|\bar{q}\|_\beta n_x w(|x|+r)^\beta,
\]
and hence Condition~\ref{cond:uniq} holds provided
\[
	z(s)+w(s)^\beta \leq K_1 s+K_2, \quad s\geq0,
\]
for some $K_1,K_2>0$.

In the case of $g_{x,y}$ given by \eqref{eq:nonsym} we obtain
\[
	\frac{\partial}{\partial q_y}g_{x,y}\bigl(\bar{q}_{x,r}\bigr)=\frac{\phi'(|q_y-q_x|)\sum\limits_{z\sim x,z\neq y}\phi(|q_z-q_x|)}{\Bigl(\sum\limits_{z\sim x}\phi(|q_z-q_x|)\Bigr)^2}\frac{q_y}{|q_y|}
\]
and proceed in a completely similar way provided  the condition of boundedness of $|\phi'(s)|$ is replaced by the bound $|\phi'(s)|\leq \phi_1 \phi(s)$, $s\geq0$.

Finally, one can consider the modification of \eqref{eq:self-align}, which corresponds to the so-called flocking dynamics \cite{MT}:
\begin{equation}\label{eq:flocking}
		\frac{d}{dt}q_x(t)=\sum_{y\sim x} g_{x,y}\bigl(\bar{p}_{x,r}(t)\bigr) \bigl(q_y(t)-q_x(t)\bigr), \qquad \frac{d}{dt}p_x(t)=q_x(t).
\end{equation}
In particular, for $g_{x,y}$ given by \eqref{eq:sym}, we will get an infinite-particle counterpart of the well-known Cucker--Smale dynamics, see e.g. \cite{CCR,MT}. It is straightforward to check that
\[
	U_x(q_x,p_x):=\frac{1}{2}|q_x|^2+\frac{1}{2}|p_x|^2
\]
fulfills Condition~\ref{cond:diss} and Condition~\ref{cond:uniq} holds for $g_{x,y}$ as in \eqref{eq:sym} or \eqref{eq:nonsym}.

\subsection*{Acknoledgements} We would like  to thank Sergio Albeverio, Yuri Kondratiev, Oles Kutovyi and Tanja Pasurek for their interest to this work and stimulationg discussions. Financial support of the Alexander von Humboldt Stiftung, the European Commission under the project STREVCOMS PIRSES-2013-612669 and DFG through SFB 1283 ``Taming uncertainty and profiting from randomness and low regularity in analysis, stochastics and their applications'' is gratefully appreciated.

\end{document}